\newcommand{\F}{\mathcal{F}}
\newcommand{\R}{\mathbb{R}}
\newcommand{\Z}{\mathbb{Z}}
\renewcommand{\H}{\mathbb{H}}
\newcommand{\C}{\mathbb{C}}
\newcommand{\T}{\mathbb{T}}
\newcommand{\Sp}{\mathbb{S}}
\renewcommand{\O}{\mathcal{O}}
\newcommand{\matriz}[4]{\left(
\begin{array}{cc}
 #1 & #2 \\
 #3 & #4
\end{array}
\right)}
\newcommand{\inte}[1]{\mathaccent23{#1}}
\newcommand{\scirc}{{\scriptstyle \circ}}
\newtheorem{theorem}{Theorem}
\newtheorem{proposition}{Proposition}
\theoremstyle{definition}
\newtheorem{definition}[theorem]{Definition}
\newtheorem{example}[theorem]{Example}
\theoremstyle{remark}
\newtheoremstyle{named}{}{}{\itshape}{}{\bfseries}{.}{.5em}{\thmnote{#3} #1}
\theoremstyle{named}
\numberwithin{equation}{section}
\begin{document}

\title[On the construction of minimal foliations by hyperbolic surfaces ]{On the construction of minimal foliations by hyperbolic surfaces on 3-manifolds}


\author[F. Alcalde]{Fernando Alcalde Cuesta}
\address{Instituto de Matem\'aticas, Universidade de Santiago de Compostela, E-15782, Santiago de Compostela, Spain. }
\email{fernando.alcalde@usc.es}
\thanks{This work was partially supported by Spanish MINECO/AEI Excellence Grant MTM2016-77642-C2-2-P, Galician Grant GPC2015/006 and European Regional Development Fund, ANII (Uruguay) FCE-135352 and project PAPIIT IN106817 (UNAM, Mexico). The authors Fran\c coise Dal'Bo, Matilde Mart\'{\i}nez, and Alberto Verjovsky would like to thank the University of Santiago de Compostela for its hospitality. All the authors are also grateful to the referees for a very careful reading of the manuscript and many valuable suggestions.}

\author[F. Dal'Bo]{ Fran\c{c}oise Dal'Bo}
\address{Institut de Recherche Math\'ematiques de Rennes, Universit\'e de  Rennes 1, F-35042 Rennes, France}
\email{francoise.dalbo@univ-rennes1.fr}

\author[M. Mart\'{\i}nez]{Matilde Mart\'{\i}nez}
\address{Instituto de Matem\'atica y Estad\'{\i}stica Rafael Laguardia, Facultad de Ingenier\'{\i}a,
Universidad de la Rep\'ublica, J.Herrera y Reissig 565, C.P.11300 Montevideo, Uruguay.}
\email{matildem@fing.edu.uy}

\author[A. Verjovsky]{Alberto Verjovsky}
\address{Universidad Nacional Aut\'onoma de M\'exico,
Apartado Postal 273, Admon. de correos \#3, C.P. 62251 Cuernavaca,
Morelos, Mexico.}
\email{alberto@matcuer.unam.mx}

\subjclass[2010]{Primary 57R30}

\date{}



\begin{abstract}
We describe several methods to construct  minimal
foliations by hyperbolic surfaces on closed 3-manifolds, and discuss the properties of the examples thus obtained. 
\end{abstract}


\maketitle


\section{Introduction and motivation} \label{Sintro}

Foliations by surfaces generalize compact surfaces in many ways. 
Poincar\'e's Uniformization Theorem tells us that  {\em most} compact surfaces are hyperbolic --namely, those with negative Euler characteristic.
In a similar way, many (arguably {\em most}) 
foliations by surfaces are {\em hyperbolic} in the sense that they admit a Riemannian metric along
the leaves with constant curvature $-1$.
In the 1960's, 
Lickorish proved that any closed orientable 
3-manifold admits a foliation by surfaces,
but in his construction there is always a toral leaf that bounds a Reeb component.
Some explicit descriptions of foliations by hyperbolic surfaces appear in the works of Calegari, Fenley, and Gabai, who, motivated by the work of Thurston, used foliations and laminations in the study of 3-manifolds.
\medskip 

On the other hand, the geodesic and horocycle flows over compact hyperbolic surfaces have
been studied in depth since the work by 
Hopf and 
Hedlund in the 1930's. The minimality of the horocycle flow for foliations and laminations by hyperbolic surfaces has been recently dealt with in several papers by the authors and 
Matsumoto, see \cite{AD, ADMV, MMV, Matsumoto}. 
Specifically, in \cite{ADMV}, we showed the following dichotomy for any foliation $\F$ by dense hyperbolic surfaces on a closed manifold $M$:
{\em Either all leaves of $\F$ are geometrically infinite (i.e. with non finitely generated fundamental group) or the leaves without holonomy are simply connected and all essential loops represent non trivial holonomy.} In the former case (where all leaves have infinitely many ends, infinite genus, or both), we proved that the horocycle flow on the unit tangent bundle of $\F$ is also \emph{minimal }(i.e. all its leaves are dense). 
\medskip 

Thus, from this double perspective, we are interested in knowing which 3-manifolds admit foliations by hyperbolic surfaces, and we ask ourselves: {\em Given one of the eight Thurston geometries, is there a manifold $M$, admitting such a geometric structure, and having a foliation $\F$ by hyperbolic surfaces? Of which type?}
As proved by 
Yue in \cite[Theorem 1]{Yue}, if a compact 3-manifold $M$ admits a foliation by surfaces of negative curvature, then its fundamental group $\pi_1(M)$ must have exponential growth. Therefore, Yue's theorem implies that any closed 3-manifold admitting a geometric structure of type $\Sp^3$, $\Sp^2\times \R$, $\R^3$ or $Nil$ does not admit a foliation by hyperbolic surfaces since its fundamental group is finite or infinite with polynomial growth. On the contrary, for each of the geometric structures $Sol$, $\H^2\times\R$, $\widetilde{SL}(2,\R)$ and $\H^3$, there are manifolds admitting minimal foliations by hyperbolic surfaces. 
\medskip 

Any closed orientable 3-manifold modeled on the solvable group $Sol$ is the quotient $\T^3_A$ of the product $\T^2 \times \R$ by the $\Z$-action generated by $f(x,y,t) = (A(x,y), t+1)$, where $A$ is a linear hyperbolic automorphism of $\T^2$. The suspension of $A$ gives an Anosov flow on $\T^3_A$, which is induced by the  vector field $\partial / \partial t$. The center-unstable foliation $\F$ of this flow --namely, the foliation induced by the product of the unstable foliation $\F_A^+$ associated to the eigenvalue $|\lambda| >1$
and $\R$-- is a foliation by hyperbolic surfaces. As we shall see
in Theorem~\ref{thm:Novikov}, since the fundamental group of $\T^3_A$ is solvable, it does not admit a foliation with geometrically infinite leaves.
\medskip 

As for 3-manifolds which are either modeled on $\H^2\times\R$ or $\widetilde{SL}(2,\R)$,
we shall show in Proposition~\ref{prop:suspension} that both types of hyperbolic surfaces, geometrically finite and infinite, can be realized as leaves of minimal foliations using the classical suspension method from the representation of a Fuchsian group. In Section~\ref{Sbasic}, we apply three different methods, also classical in some sense, to construct 
other
examples of minimal foliations by geometrically infinite surfaces on this kind of manifolds, or some which do not admit any geometric structure.
\medskip 

In the core of the paper, we deal with the construction of minimal foliations by hyperbolic surfaces on hyperbolic 3-manifolds. Foliations by hyperbolic surfaces are known to exist on closed hyperbolic 3-manifolds, see \cite{Calegaribook}, \cite{Fenley} and \cite{Thurston3}. Examples without compact leaves have been constructed 
in \cite{Meigniez} and  \cite{Nakayama}. Minimal foliations by geometrically finite surfaces also appear as center-stable foliations of transitive Anosov flows, which are known to exist on hyperbolic manifolds. Here, we present two families of minimal examples with geometrically infinite leaves and different transverse dynamics. 
\medskip 

In Section~\ref{Swithoutholonomy}, we construct foliations without holonomy. These can only exist in fiber bundles over $\Sp^1$, see \cite{Tischler}. In \cite{ConlonGrowth}, Cantwell and  Conlon proved that the leaves of any $C^2$ foliation without holonomy are homeomorphic to the plane, the cylinder or the hyperbolic surfaces with one or two nonplanar ends, see Figure~\ref{fig:leaves}. The only closed 3-manifold admitting a $C^2$ foliation by planes is $\T^3$ \cite{Rosenberg}, whereas the closed 3-manifolds admitting a $C^2$ foliation by cylinders are nilmanifolds \cite{Hector}. We prove: 
\medskip 

\begin{theorem} \label{thm:withoutholonomy}  Let $M$ be a closed orientable hyperbolic 3-manifold that fibers over $\Sp^1$ with fiber $\Sigma$. Assume that the first Betti number $b_1 \geq 2$. Then $M$ admits a minimal foliation without holonomy $\F$  whose leaves are diffeomorphic to an abelian regular cover of $\Sigma$. 
\end{theorem}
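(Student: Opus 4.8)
The plan is to realize $\F$ as the kernel foliation of a nowhere-vanishing closed $1$-form on $M$ obtained by perturbing the closed form that defines the given fibration; minimality and absence of holonomy are then formal, and the real content is the identification of the leaves. Concretely, let $p\colon M\to\Sp^{1}$ be the given fibration, set $\omega_{0}=p^{*}(d\theta)$, and note that $[\omega_{0}]$ spans $\ker\bigl(H^{1}(M;\R)\to H^{1}(\Sigma;\R)\bigr)$, which is a line by the Wang sequence of $\Sigma\hookrightarrow M\to\Sp^{1}$. As $b_{1}(M)\ge 2$, choose an integral closed $1$-form $\omega_{1}$ on $M$ with $[\omega_{1}]$ linearly independent of $[\omega_{0}]$, so that $[\omega_{1}]|_{\Sigma}\neq 0$; for a small irrational $t$ the form $\omega_{t}:=\omega_{0}+t\,\omega_{1}$ is still nowhere vanishing (since $M$ is compact and $\omega_{0}$ is), and $\F:=\ker\omega_{t}$ is a smooth codimension-one foliation of $M$, with surface leaves because $\dim M=3$.

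A foliation defined by a closed nowhere-vanishing $1$-form has trivial holonomy, a primitive of $\omega_{t}$ along transversals furnishing a holonomy-invariant transverse coordinate. The period homomorphism $\varphi_{t}\colon\pi_{1}(M)\to\R$, $\gamma\mapsto\int_{\gamma}\omega_{t}$, has image $\varphi_{0}(\pi_{1}M)+t\,\varphi_{1}(\pi_{1}M)$, a rank-two and hence dense subgroup of $\R$, since $t\notin\mathbb{Q}$ and $\varphi_{0},\varphi_{1}$ are linearly independent. By the standard description of foliations defined by closed $1$-forms (cf.\ \cite{Tischler}), the density of the period group forces every leaf of $\F$ to be dense, i.e.\ $\F$ is minimal.

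To identify the leaves, let $q\colon M'\to M$ be the cover corresponding to $N:=\ker\varphi_{0}\cap\ker\varphi_{1}$, a normal subgroup with $\pi_{1}(M)/N\cong\Z^{2}$. On $M'$ both forms become exact, $q^{*}\omega_{i}=df_{i}$, hence $q^{*}\omega_{t}=d(f_{0}+t f_{1})=:df_{t}$. Since $q$ factors through the infinite cyclic cover $\widehat M=\Sigma\times\R$ of $M$ determined by $\varphi_{0}$, and $M'\to\widehat M$ is the connected infinite cyclic cover of $\Sigma\times\R$ determined by the nonzero class $[\omega_{1}]|_{\Sigma}$, one obtains an identification $M'\cong\Sigma'\times\R$ in which $\Sigma'\to\Sigma$ is a connected infinite cyclic --- in particular abelian regular --- cover of $\Sigma$, and $f_{0}$ is the coordinate on the $\R$-factor. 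Writing $\partial_{s}$ for the vector field along that factor, $\partial f_{t}/\partial s=\omega_{t}(\partial_{s})=1+t\,\omega_{1}(\partial_{s})>0$ for $|t|$ small, uniformly on $M'$; hence $f_{t}$ is strictly increasing and proper along each line $\{\sigma'\}\times\R$, so every level set $\{f_{t}=c\}$ is the graph of a function $\Sigma'\to\R$ and is therefore connected and diffeomorphic to $\Sigma'$. Such a level set is precisely a leaf of $q^{*}\F$, so $q$ restricts on it to a covering onto a leaf of $\F$; this covering is injective because $\varphi_{t}$ is injective on the deck group $\Z^{2}$ (again using $t\notin\mathbb{Q}$), hence a diffeomorphism. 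Thus every leaf of $\F$ is diffeomorphic to $\Sigma'$, which, being a cover of the hyperbolic surface $\Sigma$ (a fiber of genus $\ge 2$), is itself hyperbolic; leafwise uniformization as in \cite{ADMV} then equips $\F$ with a leafwise metric of curvature $-1$.

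Obtaining the foliation and checking minimality and absence of holonomy are soft; the crux is the last step. The delicate points are: splitting $M'$ as $\Sigma'\times\R$ compatibly with $f_{0}$ and with the deck action, controlling $f_{t}$ along the $\R$-fibers so that its level sets are graphs, and checking that a connected level set maps \emph{bijectively} --- not merely locally diffeomorphically --- onto an entire leaf. Once this is done, the structure of infinite cyclic covers of closed surfaces shows that $\Sigma'$ has infinite genus and one or two nonplanar ends, consistent with the Cantwell--Conlon trichotomy quoted above.
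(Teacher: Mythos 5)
Your proposal is essentially the paper's proof: both take the fibration form $\omega_0=p^*(d\theta)$ and perturb it (Tischler-style) to a nowhere-vanishing closed $1$-form whose group of periods has rank $\ge 2$, then read off minimality, triviality of holonomy, and the identification of the leaves as abelian covers of the fiber. The paper first replaces the monodromy by a smooth pseudo-Anosov model via \cite{Katok&al}, allows period groups $\cong\Z^r$ for any $2\le r\le b_1$, and cites \cite[\S VIII.1.1]{Hector-Hirsch} and Sacksteder for the leaf description and minimality, whereas you fix $r=2$ and prove those two points directly through the $\Z^2$-cover $M'\cong\Sigma'\times\R$; this is only a difference in exposition and generality, not in method.
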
 

In Section~\ref{Sbranching}, we describe another method to obtain minimal foliations by geometrically infinite surfaces on hyperbolic 3-manifolds from the center-(un)stable foliation of the Anosov flow on $T^3_A$ via branched coverings. More precisely, we prove: 

\begin{theorem} \label{thm:Franks}
 Let $M$ be a closed orientable hyperbolic 3-manifold that fibers over $\Sp^1$. Assume that the monodromy has oriented unstable manifolds and the stretch factor is quadratic over $\mathbb{Q}$. Then $M$ admits a minimal transversely affine foliation $\F$ whose leaves are geometrically 
 infinite surfaces.
\end{theorem}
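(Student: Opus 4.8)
The plan is to realize $M$ as a branched cover of a $Sol$ manifold $\T^3_A$, pull back the center-unstable foliation of its suspension Anosov flow, and then desingularize near the branch locus. Write $M$ as the mapping torus $M_\phi$ of its monodromy $\phi\colon\Sigma\to\Sigma$. Since $M$ is hyperbolic, $\phi$ is pseudo-Anosov; let $\F_\phi^{\pm}$ be its invariant (singular) measured foliations and $\lambda>1$ its stretch factor. By hypothesis $\F_\phi^{+}$, hence also $\F_\phi^{-}$, is orientable with $\phi$-preserved orientation, so $\phi$ is an affine automorphism of a translation structure on $\Sigma$ with derivative $\mathrm{diag}(\lambda,\lambda^{-1})$; and $\lambda$ is quadratic over $\mathbb{Q}$. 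Under exactly these hypotheses, a theorem of Franks shows that, after a topological conjugacy, $\phi$ is the branched-covering lift of a hyperbolic automorphism $A$ of $\T^2$ with eigenvalue $\lambda$: there is a branched covering $p\colon\Sigma\to\T^2$, branched over a finite $A$-invariant (hence $A$-periodic) set $B\subset\T^2$ containing the images of the singular points of $\F_\phi^{\pm}$, with $p\circ\phi=A\circ p$.

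Suspending $p$ produces a branched covering $P\colon M_\phi\to\T^3_A$ whose branch locus $\Gamma$ is the finite union of closed orbits of the suspension Anosov flow of $A$ obtained by suspending $B$, and $P$ intertwines the suspension flow of $\phi$ --- a pseudo-Anosov flow, singular along a finite set $\widetilde\Gamma$ of closed orbits lying over $\Gamma$ --- with that Anosov flow. The center-unstable foliation $\F^{cu}$ of the Anosov flow on $\T^3_A$, namely the suspension of the unstable linear foliation $\F_A^{+}$ of $A$, is minimal, is foliated by planes and cylinders (the cylinder leaves being the unstable manifolds of the closed orbits), and is transversely affine: on a strong-stable transversal its holonomy pseudogroup consists of affine maps $x\mapsto\lambda^{k}x+c$. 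Since $\F_\phi^{+}=p^{*}\F_A^{+}$, the pull-back $\Phi:=P^{*}\F^{cu}$ is the suspension of $\F_\phi^{+}$ --- equivalently, the (singular) weak-unstable foliation of the suspension pseudo-Anosov flow of $\phi$ --- a transversely affine codimension-one foliation of $M_\phi\setminus\widetilde\Gamma$ with a prong singularity along each orbit of $\widetilde\Gamma$.

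The next step is to desingularize $\Phi$. In a small solid-torus neighbourhood of each orbit of $\widetilde\Gamma$, $\Phi$ is modelled on a $k_i$-pronged orbit with linear holonomy $x\mapsto\lambda^{m_i}x$; there one replaces $\Phi$ by a genuine, Reebless foliation that agrees with it near the boundary and carries a transverse affine structure extending the given one. Orientability of $\F_\phi^{+}$ is used precisely here: it forces each $k_i$ to be even and the prongs to be coherently co-oriented, which is what makes a transversely affine filling possible. Writing $\F$ for the resulting foliation of $M$, it is transversely affine by construction.

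It remains to check that $\F$ is minimal with geometrically infinite leaves. Minimality of $\F^{cu}$ is immediate from that of $\F_A^{+}$ on $\T^2$; it passes to $\Phi$, whose leaves cover dense leaves and whose model --- the weak-unstable foliation of a pseudo-Anosov flow --- is minimal; and since the fillings are supported in arbitrarily small neighbourhoods of $\widetilde\Gamma$, they create no proper minimal sublamination, so $\F$ is minimal. As for the leaves, a leaf $L$ of $\F$ covers a plane or cylinder leaf of $\F^{cu}$ by a map that is an honest covering away from $\widetilde\Gamma$, but $L$, being dense, re-enters each of the finitely many filled neighbourhoods infinitely often, and each passage of $L$ near a singular orbit is reconnected by the filling. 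The crux of the proof --- and the step I expect to be the main obstacle --- is to show that this reconnection genuinely accumulates topology, adding rather than cancelling handles or ends, so that $\pi_1(L)$ fails to be finitely generated; this should follow from an explicit local model for the filling of a $k_i$-pronged singular orbit, together with the fact that the return times of $L$ to these neighbourhoods are unbounded. Granting it, every leaf of $\F$ is a geometrically infinite surface. The transverse-affine extension across each filled neighbourhood, routine once the local model is fixed, is the only remaining bookkeeping.
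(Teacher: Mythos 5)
You have the right starting point: Franks--Rykken gives a branched covering $p\colon\Sigma\to\T^2$ semiconjugating $\varphi$ to a hyperbolic toral automorphism $A$, which suspends to a branched covering $P\colon M\to\T^3_A$, and the foliation on $M$ should come from the center-unstable foliation $\F_0$ of the suspension Anosov flow. From there, however, your route diverges from the paper's at a crucial point and runs into a genuine gap.

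You pull back $\F_0$ with the branch locus equal to the link of closed Anosov orbits $\O_1,\dots,\O_n$. Since those orbits lie \emph{inside} center-unstable leaves, the pullback $P^*\F_0$ is singular --- it is the weak-unstable foliation of the suspension pseudo-Anosov flow, with prong singularities along the ramification orbits --- and you then propose to desingularize it while retaining a transverse affine structure, minimality, and infinite-genus leaves. You yourself flag this as the main obstacle, and rightly so: there is no canonical, transverse-structure-preserving filling of a $k$-prong singular orbit ($k>2$), Reeblessness and minimality of the filled-in foliation are not automatic, and once the filling alters the leaves the clean ``leaf $=$ branched cover of leaf'' picture is destroyed, so the infinite-genus claim no longer follows from an Euler-characteristic count. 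In effect the hardest part of your argument is exactly the part you left unproved.

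The paper sidesteps all of this with one move you are missing: before lifting, it \emph{isotopes the branch locus off the leaves}. Since $\F_0$ is taut and minimal with no compact leaves, each periodic orbit $\O_i$ is isotopic to a closed curve $\gamma_i$ transverse both to the fibration $q_0$ and to $\F_0$. Replacing $B_0=\bigcup\O_i$ by $\bigcup\gamma_i$ does not change the homeomorphism type of the branched cover (the branching data depend only on the isotopy class of the branch link and the monodromy representation), but now the branch locus is a union of closed transversals. Consequently the lift $\F=\pi^*\F_0$ is automatically a genuine, non-singular foliation, and near the ramification link the covering has the local form $(z,t)\mapsto(z^k,t)$ in a bidistinguished chart, so the holonomy pseudogroup of $\F$ based on a component of $B$ is \emph{conjugate} to that of $\F_0$ based on $\gamma_i$. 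Minimality and the transversely affine structure are therefore inherited for free. Finally, each leaf $L$ of $\F$ is an honest branched cover of a leaf $L_0$ of $\F_0$ (a plane or cylinder) with countably many branch points accumulating on every end; exhausting $L_0$ by disks or annuli $D_k$ containing $k$ branch points and applying Riemann--Hurwitz gives $\chi(D_k')\le d-k\to-\infty$, so $L$ has infinite genus and no planar end. No desingularization lemma is needed, and the transverse structure never has to be ``repaired.''

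So the conclusion is correct and the skeleton (Franks--Rykken plus pullback of $\F_0$) is right, but the step you identify as the crux is a real gap, and it is precisely the step that the paper's isotopy trick eliminates.
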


In both cases, up to a homeomorphism preserving the fibration,  the monodromy (and hence the manifold and the foliation) can be always assumed of class $C^\infty$. 
Finally, in Section~\ref{SAnosov}, we illustrate this construction via branched coverings with two examples done with ``bare  hands''. 
\medskip 

All the examples we construct are {\em $\R$-covered} since  
 the leaf spaces of the lifted foliations to $\H^3$ are diffeomorphic to $\R$. Theorem~\ref{thm:withoutholonomy} provides foliations without holonomy which are {\em uniform}  \cite{Thurston3} since any two leaves in $\H^3$ are a bounded (in fact, constant) distance apart. On the contrary, all the foliations obtained from Theorem~\ref{thm:Franks}  are not uniform since they have the same transverse structure as the unstable foliation of the Anosov flow on $T^3_A$, see 
 \cite
 {Calegaribook} and 
 \cite
 {Thurston3}. 
Using the {\em continuous extension property} from \cite{Fenley}, it is possible to adapt the argument of \cite{CannonThurston} describing
 the {\em universal circle} of $\F$ and proving that it is a {\em  $2$-sphere-filling Peano curve}.

\section{Some preliminaries} \label{Spre}

\subsection{Foliations}
A {\em foliation by surfaces} $\F$ on a closed 3-manifold $M$ is given by a $C^\infty$ atlas $\{(U_i,\varphi_i)\}$ which consists of $C^\infty$ diffeomorphisms $\varphi_i: U_i \to D\times T$ from open sets $U_i$ that cover $M$ to the product of an open disk $D$ in $\R^2$ and an open interval $T$ in $\R$ such that the  change of coordinates  
has the form $\varphi_i \circ \varphi_j^{-1}(x,y) = (\varphi_{ij}(x,y), \gamma_{ij}(y))$ for every $(x,y) \in \varphi_j(U_i \cap U_j)$.
Each $U_i$ is called a {\em foliated chart}, a set $\varphi_i^{-1}(\{x\}\times T)$ being its
{\em transversal}. The sets of the form $\varphi_i^{-1}(D\times\{y\})$, called {\em plaques}, glue together to form maximal
connected surfaces called {\em leaves}. A foliation is said to be {\em minimal} if all its leaves are dense.  
The tangent bundle $T\F$ and the normal bundle $N\F = TM / T\F$ are 
naturally trivialized on each foliated chart. 
The foliation $\F$ is {\em tranversely orientable} if $N\F$ is 
orientable. 
In this paper, we will always consider transversely orientable $C^\infty$ foliations on closed oriented 3-manifolds. 
\medskip 

In each foliated chart we can endow the plaques with a Riemannian metric, in a continuous way. Gluing the local
metrics with partitions of unity gives a Riemannian metric on each leaf, which varies continuously in the $C^\infty$ topology.
Each leaf is then endowed with a conformal structure, or equivalently, with a Riemann surface structure. According to 
\cite{Candel} and \cite{Verjovsky}, if all leaves are uniformized by the 
Poincar\'e half-plane $\H^2$, the uniformization map is continuous and 
leaves have constant curvature $-1$. We say
that $\F$ is a foliation {\em by hyperbolic surfaces}. Recall that a hyperbolic surface $S$ is the quotient of 
$\H^2$ under the action of a torsion-free discrete subgroup
$\Gamma$ of the group $PSL(2,\R)$ of orientation preserving isometries of $\H^2$. 
When $\Gamma$ is of finite type we say that $S$ is {\em geometrically finite}, else it is {\em geometrically infinite}.  
\medskip

The following theorem summarizes some important results by 
Haefliger \cite{Haefliger} and 
 Novikov \cite{Novikov}, as well some improvements by Rosenberg \cite{Rosenberg} and  Hector  \cite{Hector}:

\begin{theorem} \label{thm:Novikov} Let $\F$ be a foliation by surfaces of a compact 3-manifold $M$. Assume $\F$ does not contain a Reeb component. Then the following properties are satisfied: 

\begin{list}{\labelitemi}{\leftmargin=0pt}
\item[~(1)]$M$ is irreducible if $\F$ is not defined by a $\Sp^2$-fibration over $\Sp^1$.

\item[~(2)]  Every leaf $L$ of $\F$ is {\em  incompressible}, i.e. if $i:L\hookrightarrow M$ denotes the inclusion, the induced map $i_*:\pi_1(L)\to  \pi_1(M)$ is injective.

\item[~(3)] Any closed transversal $\gamma$ represents a nontrivial and non-torsion element of the fundamental group of $M$, and therefore, $\pi_1(M)$ is infinite.

\item[~(4)] If $\pi_1(M)$ 
 does not contain a free group with two generators, then all leaves are planes or cylinders, and $\F$ has both planar
 and cylindrical leaves when they are hyperbolic. 
\end{list} 
\end{theorem}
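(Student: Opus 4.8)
The plan is to derive all four items from four classical results used as black boxes: Novikov's theorem (a transversely orientable $C^\infty$ codimension-one foliation of a closed $3$-manifold possessing a \emph{vanishing cycle} contains a Reeb component; in particular a Reebless foliation has no vanishing cycle, and no closed $3$-manifold with finite fundamental group admits a Reebless foliation); Reeb's global stability theorem (a transversely orientable codimension-one foliation of a closed manifold with a compact leaf of finite fundamental group is a fibration over $\Sp^1$); the Poincar\'e--Hopf index count for the transversely oriented singular one-dimensional foliation that $\F$ induces on a surface placed in general position with respect to $\F$; and the classification theorems of Rosenberg and Hector quoted in the introduction (a closed $3$-manifold carrying a foliation by planes is $\T^3$; one carrying a foliation by cylinders is a nilmanifold). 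Reeblessness is the hypothesis that powers Novikov's and Reeb's theorems here, and I use the paper's standing conventions that $\F$ is $C^\infty$ and $M$ is closed and orientable.

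For (1), I would argue by contradiction. If $M$ were reducible, fix an embedded $2$-sphere $\Sigma$ bounding no ball, chosen within its isotopy class so as to meet $\F$ with the fewest Morse tangencies. Then $\F$ cuts out on $\Sigma$ a transversely oriented singular $1$-foliation whose indices sum to $\chi(\Sp^2)=2$, so there is a center-type tangency. Following the resulting nest of circles across $\Sigma$, as in Novikov's argument, the process must terminate at a saddle (contradicting minimality of the number of tangencies), by running into a second center or by exhausting $\Sigma$ (in which case $\Sigma$ bounds a Reeb-type $3$-ball, hence a ball, a contradiction), or else it exhibits $\Sigma$ as isotopic to a leaf. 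In the last case that leaf is compact with trivial fundamental group, so by Reeb's global stability theorem $\F$ is an $\Sp^2$-fibration over $\Sp^1$. Hence, unless $\F$ is such a fibration, every embedded $2$-sphere bounds a ball, i.e. $M$ is irreducible; by the sphere theorem this also yields $\pi_2(M)=0$.

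For (2) and (3), both reduce to manufacturing a vanishing cycle and invoking Novikov. If some $i_*\colon\pi_1(L)\to\pi_1(M)$ had nontrivial kernel, I would represent a nontrivial kernel element by a loop $\alpha\subset L$ together with a spanning disk $f\colon D^2\to M$ with $f(\partial D^2)=\alpha$, put $f$ in general position so that it is transverse to $\F$ away from finitely many Morse tangencies, observe that $\partial D^2$ then lies in a leaf of $f^*\F$ while the interior meets $\F$ essentially, so an index count (doubling along $\partial D^2$) forces a center, and extract from this center a vanishing cycle of $\F$ via Novikov's argument --- whence a Reeb component, contradiction; so every leaf is incompressible. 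For (3), let $\gamma$ be a closed transversal (so, $\F$ being transversely oriented, $\gamma$ crosses the leaves always in the same sense). If $[\gamma]$ had finite order $n$ in $\pi_1(M)$, then $\gamma^n$ bounds a disk $f\colon D^2\to M$; now $f|_{\partial D^2}$ is a closed transversal traversed $n$ times, so the induced singular $1$-foliation on $D^2$ is transverse to $\partial D^2$, the index count ($\chi(D^2)=1$) again produces a center, and Novikov's argument again yields a vanishing cycle and a Reeb component, a contradiction; hence $[\gamma]$ is nontrivial and of infinite order. Finally $\pi_1(M)$ is infinite, because a closed $3$-manifold with finite fundamental group admits no Reebless foliation.

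For (4), assume $\pi_1(M)$ contains no rank-two free group. By (2), $\pi_1(L)$ embeds in $\pi_1(M)$ for every leaf $L$, so no leaf group contains a rank-two free subgroup; the fundamental group of a connected surface with that property is $\{1\}$, $\Z$, $\Z^2$ or the Klein-bottle group. The Klein-bottle group is excluded since $M$ is orientable and $\F$ transversely orientable, so leaves are orientable; a $2$-torus leaf is excluded because a torus leaf with nontrivial holonomy produces a Reeb component (Kopell's lemma constrains its abelian, two-generator, one-sided holonomy), while a torus leaf with trivial holonomy propagates to a foliated $T^2\times I$ and, continued, forces $\F$ to be a torus bundle --- a case excluded by hypothesis, and in the hyperbolic setting of this paper impossible outright since a $2$-torus admits no metric of curvature $-1$. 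Thus every leaf is a plane or an open cylinder. When the leaves are hyperbolic, they cannot all be planes, for then $M\cong\T^3$ by Rosenberg, whose fundamental group has polynomial growth and hence, by Yue's theorem, carries no foliation by hyperbolic surfaces; symmetrically they cannot all be cylinders, for then $M$ would be a nilmanifold by Hector, again of polynomial growth; so both planar and cylindrical leaves occur. The genuine work here --- and the step I expect to be the main obstacle --- is Novikov's vanishing-cycle extraction in (1)--(3), turning a center of the induced singular foliation on a sphere or a disk into an honest vanishing cycle of $\F$, together with the torus-leaf exclusion in (4), which is the one point where one must invoke Hector's structure theory (or, as here, simply restrict to foliations by hyperbolic surfaces, for which a torus leaf is a priori impossible).
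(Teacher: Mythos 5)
The paper does not prove Theorem~\ref{thm:Novikov}; it states it as a summary of results due to Haefliger, Novikov, Rosenberg and Hector, so there is no ``paper's proof'' to compare against. Judged on its own terms, your treatment of parts (1)--(3) is the standard Novikov--Haefliger vanishing-cycle machinery and is essentially sound: put a sphere, a null-homotopy of a leafwise loop, or a null-homotopy of (a power of) a closed transversal in general position, run the Poincar\'e--Hopf count on the induced singular one-dimensional foliation to produce a center, and propagate the vanishing cycle to a Reeb component. (One should be a little more careful in (1) to separate the embedded-sphere argument from the appeal to the sphere theorem, and in (3) to record that the pulled-back foliation is transverse to the boundary so the index sum is still $\chi(D^2)=1$, but these are expository, not mathematical, issues.)

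Part (4) has a genuine gap. The assertion that ``a torus leaf with nontrivial holonomy produces a Reeb component'' is false, and Kopell's lemma does not yield it. A concrete counterexample: on $\T^2\times[0,1]$ with coordinates $(\theta,\phi,t)$, take the foliation defined by $dt - f(t)\,d\theta$ where $f\ge 0$ vanishes exactly at $t=0,1$; the two boundary tori are leaves with nontrivial (one-sided contracting) holonomy in the $\theta$-direction, and all interior leaves are cylinders. Gluing the two boundary tori by the identity gives a smooth, transversely orientable, Reebless foliation of $\T^2\times\Sp^1$ (whose fundamental group $\Z^3$ certainly contains no $F_2$) with a torus leaf. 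So the exclusion of torus leaves cannot be derived from Reeblessness and the group-theoretic hypothesis alone; it genuinely needs the hypothesis that the leaves are hyperbolic, which your final sentence invokes but the earlier two clauses try (unsuccessfully) to avoid. The same remark applies to sphere leaves, which your case analysis silently drops: a sphere has $\pi_1=1$ as well, and is ruled out only because a sphere is not hyperbolic (or, equivalently, because Reeb stability would force a $\Sp^2$-fibration, already set aside in (1)). Once one reads the clause ``when they are hyperbolic'' as governing all of (4), your endgame --- incompressibility forces $\pi_1(L)\in\{1,\Z\}$, Rosenberg excludes the all-planes case ($M\cong\T^3$) and Hector the all-cylinders case ($M$ a nilmanifold), both incompatible with hyperbolic leaves by Yue --- is correct. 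In short: drop the Kopell/Reeb-component claim, state explicitly that tori and spheres are excluded because they carry no hyperbolic metric, and (4) is fine.
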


\subsection{Geometrization of surface bundles and 3-manifolds}

Let $\Sigma$ be a closed orientable surface of genus $g \geq 1$, and let $\varphi : \Sigma \to \Sigma$ be an orientation-preserving homeomorphism. 
If $g=1$, then $\varphi$ is isotopic to a linear automorphism of the torus $\T^2$.
This is given by the $2 \times 2$ matrix 
\begin{equation} \label{sl(2,Z)}
\hspace{5em}A = \matriz{a}{b}{c}{d} \in SL(2,\Z)
\end{equation}
of the automorphism $\varphi_\ast : \pi_1(\T^2) \to \pi_1(\T^2)$ with respect to the canonical basis $m$ and $p$ of 
$\pi_1(\T^2) = \Z \oplus \Z$.  It is well-known the dynamical behavior of $A$ is related to $| \, tr(A) \, |  = | a+d |$. 
Indeed, if  $|\,tr(A)\,| < 2$, $A$ has finite order. If $|\,tr(A)\,| = 2$,   up to multiplication by $- I$, $A$ is conjugated to a matrix of the form
\begin{equation} \label{nilmatrix}
\matriz{1}{n}{0}{1}
\end{equation}
that fixes $m=(1,0)$. Finally, if $|\,tr(A)\,| > 2$, up to multiplication by $- I$, $A$ has eigenvalues $\lambda >1$ and $1/\lambda < 1$ which correspond to eigenvectors $u^{\pm}$. The foliations $\F_A^{\pm}$ generated by 
$u^{\pm}$ are preserved by  $A$, but the leaves of $\F_A^+$ and $\F_A^-$ are stretched by a factor of $\lambda$ and 
$1/\lambda$ respectively.
\medskip 

If $g \geq 2$, according to a theorem of Thurston \cite{Thurston1}, 
the first two cases also appear, although the third case is much more subtle. Namely, up to isotopy, one of the following alternative holds: 
\begin{list}{\labelitemi}{\leftmargin=17pt}
\item[(1)] $\varphi$ is {\em periodic}, i.e. there is an integer $n \geq 1$ such that $\varphi^n$ is the identity $I$.
\item[(2)] $\varphi$ is {\em reducible}, i.e. $\varphi$ preserves the union of finitely many disjoint essential simple closed 
curves in $\Sigma$.
\item[(3)] $\varphi$ is {\em pseudo-Anosov},  i.e. there is a pair of transverse singular foliations $\F_\varphi^{\pm}$ endowed 
with transverse invariant measures $\mu^{\pm}$ having no atoms and full support and there is a real number 
$\lambda >1$ such that the foliations $\F_\varphi^{\pm}$ are invariant by $\varphi$ but the measures 
$\varphi_\ast (\mu^+) = \lambda \mu^+$ and $\varphi_\ast (\mu^- )=  
(1/\lambda) \mu^-$. 
\end{list}
\medskip 

Let $M_\varphi$  be the mapping torus given as the quotient of $\Sigma \times [0,1]$ by the equivalence relation that identifies $(x,0)$ with $(\varphi(x),1)$. 
By construction, it has a natural fibration
$f: M_\varphi \to \Sp^1$
induced by the trivial fibration $p_2 : \Sigma \times [0,1] \to [0,1]$. 
If $g =1$, 
$M_\varphi$ satisfies one the following conditions: 
\begin{list}{\labelitemi}{\leftmargin=17pt}
\item[(1)] If $\varphi$ is periodic, then $M_\varphi$ admits a $\R^3$ geometric structure. 
\item[(2)] If $\varphi$ is reducible, then $M_\varphi$ contains an incompressible torus.
\item[(3)] If $\varphi$ is Anosov, then $M_\varphi$ admits a $Sol$ geometry. 
\end{list}
If $g \geq 2$, Thurston's geometrization theorem for surface bundles \cite{Thurston2} states: 
\begin{list}{\labelitemi}{\leftmargin=17pt}
\item[(1)] If $\varphi$ is periodic, then $M_\varphi$ admits an $\H^2 \times \R$ geometry.
\item[(2)] If $\varphi$ is reducible, then $M_\varphi$ contains an incompressible torus. 
\item[(3)] If $\varphi$ is pseudo-Anovov, then $M_\varphi$ admits an $\H^3$ geometry. 
\end{list}
\medskip

Thurston's geometrization conjecture was proved by Perelman and says that every prime closed 
3-manifold $M$, 
either has a geometric structure 
or it splits along incompressible tori into pieces which admit a geometric structure and whose interiors have finite volume. 
The eight geometric structures described by Thurston derive from the simply connected manifolds $\Sp^3$, $\R^3$, and $\H^3$, the product manifolds $\Sp^2\times \R$ and $\H^2\times\R$, and the Lie groups $Nil$, 
$Sol$, and $\widetilde{SL}(2,\R)$. In an equivalent way, each prime closed 3-manifold $M$ either has a geometric structure or 
it splits along a family of disjoint incompressible tori as the union of {\em hyperbolic manifolds} 
(which admit $\H^3$ geometries) and Seifert fibered pieces. 

\subsection{Foliated $\Sp^1$-bundles}

Let $\Sigma$ be a closed orientable surface of genus $g \geq 2$. Assume $\Sigma = \Gamma \backslash \H^2$ is given by the action of a discrete subgroup $\Gamma$ of 
$PSL(2,\R)$.
The product $\Sigma\times \Sp^1$ is an example of foliation by hyperbolic surfaces, but more interesting examples can be constructed as suspension of a representation $\rho:\Gamma\to PSL(2,\R)$.
Since $\rho(\Gamma)$ acts on
$\Sp^1=\partial \mathbb{H}^2$, the circle at infinity of $\mathbb{H}^2$, we have a diagonal action of $\Gamma$ on $\H^2\times \Sp^1$ giving rise to a $\Sp^1$-bundle $M=\Gamma \backslash (\H^2\times \Sp^1)$ over $\Sigma$. The horizontal foliation of $\H^2\times \Sp^1$ induces a foliation $\F$ whose leaves are diffeomorphic to the hyperbolic surfaces $\Gamma_z \backslash \H^2$. Here,
$\Gamma_z=\{\gamma\in\Gamma: \ \rho(\gamma)(z)=z \}$ is the stabilizer of $z \in \Sp^1$.
The minimality of $\F$ is equivalent to that of the action of $\Gamma$ on $\Sp^1$. Foliated $\Sp^1$-bundles are classified, up to $C^\infty$ isomorphism, by their {\em Euler class} $e(\rho)$, which is an integer such that $|e(\rho)| \leq 2g-2$. As remarked in \cite{Scott},  $M$ admits a geometric structure $\H^2\times \Sp^1$ or $\widetilde{SL}(2,\R)$ depending on whether $e(\rho) = 0$ or $e(\rho) \neq 0$, 
and we can state the following result:

\begin{proposition}  \label{prop:suspension}
Any foliated $\Sp^1$-bundle $M$ has a minimal foliation by geometrically finite hyperbolic surfaces. If 
$|e(\rho)|\neq 2g-2$, 
it also has a minimal foliation with geometrically infinite leaves.
\end{proposition}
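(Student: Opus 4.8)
My plan rests on the fact, used just before the statement, that a foliated $\Sp^1$-bundle over $\Sigma=\Gamma\backslash\H^2$ is determined up to $C^\infty$-isomorphism by its Euler number; so, writing $e:=e(\rho)\in\Z$ with $|e|\le 2g-2$, it suffices to produce, over $\Sigma$ and with the same Euler number, two suspensions $\F_\sigma$ of representations $\sigma:\Gamma\to\PSL$ that are minimal and whose leaves $\Gamma_z\backslash\H^2$ have the required type (here $\Gamma_z=\{\gamma:\sigma(\gamma)(z)=z\}$, $z\in\Sp^1=\partial\H^2$). Recall that $\F_\sigma$ is minimal exactly when $\sigma(\Gamma)$ acts minimally on $\Sp^1$. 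Two elementary remarks do the bookkeeping. First, the stabilizer of a point of $\Sp^1$ in $\PSL$ is conjugate to the group of upper triangular matrices, hence metabelian; so if $\sigma$ is \emph{faithful}, every $\Gamma_z\cong\sigma(\Gamma_z)$ is a solvable subgroup of the torsion-free word-hyperbolic group $\Gamma$, therefore trivial or infinite cyclic, and every leaf of $\F_\sigma$ is a hyperbolic plane or cylinder --- in particular geometrically finite. Second, if $\sigma$ is \emph{not} faithful but $\sigma(\Gamma)$ is infinite, then $N:=\ker\sigma$ is a nontrivial normal subgroup of infinite index in $\Gamma$; since a nontrivial finitely generated normal subgroup of a surface group of genus $\ge 2$ has finite index, $N$ is infinitely generated, and for every $z$ outside the countable set $\bigcup_{\gamma\ne 1}\mathrm{Fix}(\sigma(\gamma))$ one has $\Gamma_z=N$, so the leaf $N\backslash\H^2$ is geometrically infinite.

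For the first assertion I would take, when $|e|=2g-2$, a Fuchsian uniformization of $\Gamma$ (or its mirror, to fix the sign): its image is a cocompact lattice, which acts minimally on $\Sp^1$, and by the first remark all leaves are planes and cylinders. When $|e|<2g-2$, the component of $\mathrm{Hom}(\Gamma,\PSL)$ with Euler number $e$ is nonempty and positive-dimensional (Milnor--Wood, Goldman), and a generic $\sigma$ in it is faithful with dense --- hence minimal --- image, so again $\F_\sigma$ is a minimal foliation all of whose leaves are geometrically finite.

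For the second assertion, by the second remark it is enough to produce, for each $e$ with $|e|<2g-2$, a non-faithful $\sigma$ with $e(\sigma)=e$ and minimal image. If $e=0$, compose an epimorphism $\Gamma\twoheadrightarrow F_2$ (which exists for every $g\ge 2$) with an embedding $F_2\hookrightarrow\PSL$ with dense image: $\sigma$ is non-faithful, has dense image, and $e(\sigma)=0$ because it factors through a free group, so that the induced map on $H^2$ factors through $H^2(F_2)=0$. If $0<|e|\le 2g-4$ (so $g\ge 3$), compose a degree-one pinch $\Gamma\twoheadrightarrow\Gamma_{g-1}$ onto the genus-$(g-1)$ surface group --- which has nontrivial kernel --- with a representation of $\Gamma_{g-1}$ of Euler number $e$ and minimal image (a Fuchsian uniformization if $|e|=2g-4$, a generic representation otherwise); a degree-one map preserves the Euler number. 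Finally, for the one remaining value $|e|=2g-3$ --- necessarily odd --- the natural approach is to factor through a hyperbolic $2$-orbifold group whose orbifold Euler characteristic is $-(2g-3)$, e.g. $\pi_1^{\mathrm{orb}}(\Sigma_{g-1}(2,2))$, the group of the closed orientable genus-$(g-1)$ orbifold with two cone points of order $2$, which has $\chi^{\mathrm{orb}}=(2-2(g-1))-2(1-\tfrac12)=-(2g-3)$: one seeks an epimorphism $\Gamma\twoheadrightarrow\pi_1^{\mathrm{orb}}(\Sigma_{g-1}(2,2))$ realized by a map of degree one onto the orbifold, composes it with a Fuchsian uniformization of the orbifold (image a cocompact lattice, hence minimal on $\Sp^1$), and obtains $e(\sigma)=-(2g-3)$; the mirror representation gives $+(2g-3)$, and the kernel is nontrivial because the target has torsion while $\Gamma$ does not.

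I expect this last case to be the main obstacle. One genuinely cannot avoid orbifolds: a representation factoring through a surface group of genus $<g$ has Euler number an even integer of absolute value $\le 2g-4$, and one factoring through a free group has Euler number $0$. The delicate points are therefore (a) exhibiting an epimorphism $\Gamma\twoheadrightarrow\pi_1^{\mathrm{orb}}(\Sigma_{g-1}(2,2))$ that is realized by a map of degree one onto the orbifold, and (b) verifying that through such a map the Euler class pulls back to exactly $\pm(2g-3)$ --- here $\chi^{\mathrm{orb}}=-(2g-3)$ is an integer, consistent with $e(\sigma)\in\Z$. The remaining ingredients --- existence of the auxiliary epimorphisms, and the genericity statements about $\mathrm{Hom}(\Gamma,\PSL)$ --- are routine or standard, via Goldman's description of the connected components of the representation variety.
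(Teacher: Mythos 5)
Your overall strategy is the same as the paper's: separate the argument into a faithful-representation case (to get geometrically finite leaves; solvable stabilizers in a torsion-free surface group are trivial or cyclic) and a non-faithful-representation case (to get geometrically infinite leaves; the kernel is a non-trivial normal subgroup of infinite index, hence infinitely generated, hence the generic leaf is geometrically infinite). For the faithful side, your argument is essentially the paper's (it cites \cite{deBlois-Kent} for density of faithful representations and observes that non-extremal faithful implies non-discrete hence dense). For the non-faithful side, however, the paper simply invokes \cite{FunarWolff}, which guarantees non-injective representations with \emph{every} Euler class $|e| < 2g-2$, and combines this with Ghys's theorem \cite[Theorem 2]{Ghys} saying that any representation with non-zero Euler class already acts minimally on $\Sp^1$ (which relieves you of having to arrange density by hand). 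You instead try to re-derive the Funar--Wolff existence statement constructively by factoring through free groups, lower-genus surface groups, and orbifold groups.

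This is where the genuine gap is. Your cases $e=0$ and $0<|e|\le 2g-4$ are fine, but the remaining case $|e|=2g-3$ is exactly the one you flag as unverified, and it is not just a matter of routine bookkeeping: you need to (a) produce an epimorphism $\Gamma_g\twoheadrightarrow\pi_1^{\mathrm{orb}}(\Sigma_{g-1}(2,2))$ with the correct relationship between the surface and orbifold fundamental classes, and (b) show that the pull-back of the (a priori rational) orbifold Euler class of the uniformizing representation is exactly $\pm(2g-3)$ in $H^2(\Gamma_g;\Z)\cong\Z$. Neither step is done; without them the proposition is not proved for odd Euler numbers close to the extreme. (A minor inaccuracy along the way: a representation of $\Gamma_g$ factoring through $\Gamma_h$ with $h<g$ by a degree-one pinch has Euler number of absolute value $\le 2h-2\le 2g-4$, but that number need not be even -- the obstruction to reaching $2g-3$ is the size bound, not parity.) The efficient way to close the gap is the one the paper takes: cite Funar--Wolff for the existence of non-injective representations in each non-extremal Euler class, and use Ghys for minimality when $e\neq 0$, keeping only your explicit free-group construction for the $e=0$ case.
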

\begin{proof}
Consider $\rho:\Gamma\to PSL(2,\R)$ and let $e(\rho)$ be its Euler class. As seen in \cite{Goldman1}, if $|e(\rho)|=2g-2$ then $\rho$ is discrete and faithful. In this case, $M$ is the unit tangent bundle of $\Sigma$ and $\F$ is the center-stable foliation for the geodesic flow of $\Sigma$, which is a minimal foliation by planes and cylinders.
All the other Euler classes admit both faithful and non-faithful representation, as proved in \cite{deBlois-Kent} and \cite{FunarWolff}. 
If $|e(\rho)|\neq 2g-2$, a faithful representation $\rho$ is never discrete, so it is always minimal. If $\rho$ is faithful, then the stabilizer $\Gamma_z$ of any point $z \in \Sp^1$ is solvable, either trivial or infinite cyclic, and therefore the leaf passing through $z$ is either a plane or a cyclinder. This proves that all foliated bundles admit minimal foliations by planes and cylinders. Besides, all representations of nonnull Euler class correspond to minimal actions, see \cite[Theorem 2]{Ghys}, and when 
$|e(\rho)|\neq 0, 2g-2$, non-faithful representations define minimal foliations with leaves of infinite type, by a result of \cite{ADMV} mentioned earlier.
Finally, by \cite[Theorem 2]{Wood2}, if $\rho$ factorizes through a free group or a free abelian group, then $e(\rho)=0$.
In this case, $\rho$ has a nontrivial kernel, and under the assumption that $\rho(\Gamma)$ acts minimally on $\Sp^1$ (including for instance an elliptic element of infinite order), the leaves 
we obtain are geometrically infinite. 
\end{proof}

\setcounter{equation}{0} 

\section{Minimal foliations by geometrically infinite surfaces: \\ variations on some classical constructions} \label{Sbasic}

 In this section,
we construct minimal foliations by geometrically infinite surfaces in 3-manifolds with 
geometry $\H^2\times\R$, but also without geometric structure.

\subsection{Cut-and-paste constructions}

We start by describing a classical procedure for gluing handles to leaves of a foliation.
Let $\F_0$ be a minimal foliation of codimension one of a closed manifold $M_0$. We shall modify $(M_0,\F_0)$ in order to construct another minimal foliated manifold $(M,\F)$ whose leaves are geometrically infinite. Firstly, since  $\F_0$ is minimal there is a closed 
transversal $\gamma_0$ which meets all leaves. Let $V_0 \cong D \times \Sp^1$  be a closed neighborhood of $\gamma_0$ such that the 
foliation induced by $\F_0$ is conjugated to the horizontal foliation by disks $D \times \{z\}$. 
The manifold $M_1 = M_0 - \inte{V}_0$ obtained by removing the interior of $V_0$ admits a foliation $\F_1$ which is transverse to the boundary 
$\partial M_1 \cong \Sp^1 \times \Sp^1$. By construction, the foliation induced by $\F_1$ on $\partial M_1$ is conjugated to the 
horizontal foliation 
by circles $\Sp^1 \times \{z\}$. 
Let $\Sigma$ be the closed surface of genus $g \geq 1$, and remove the interior of a small disk $D$. The horizontal foliation $\F_2$ of
$M_2 = (\Sigma - \inte{D}) \times \Sp^1$ induces 
the same foliation on
$\partial M_2 \cong \Sp^1 \times \Sp^1$. If we glue together $(M_1,\F_1)$ and 
$(M_2,\F_2)$ using an orientation-preserving $C^\infty$ diffeomorphism $\psi : \Sp^1 \times \Sp^1 \to \Sp^1 \times \Sp^1$ respecting the horizontal foliation  by  \lq meridians\rq, we obtain a foliated closed manifold $(M,\F)$ whose leaves are geometrically infinite without
planar ends (i.e. all neighborhoods of all ends have infinite genus). 

\begin{definition} 
We say that $(M,\F)$ is obtained by \emph{Dehn surgery} on a knotted or unknotted closed 
transversal $\gamma_0$. We can also replace $\gamma_0$ with a link $\gamma = \gamma_1 \cup \dots \cup \gamma_k$ 
transverse to $\F_0$
\end{definition}

\begin{example} \label{ex:surgery1}
Take a linear foliation $\F_0$ on $\T^3$ given as the suspension of a representa\-tion 
$\rho : \pi_1(\T^2) \to \rm{Diffeo}_+^\infty(\Sp^1)$ sending $m$ and $p$ to rotations $R_\alpha(z)=e^{2\pi i\alpha}z$ and 
$R_\beta(z) = e^{2\pi i\beta}z$ where $\alpha$ and $\beta$ are linearly independent over $\Z$. By Dehn surgery on a closed transversal $\gamma_0$, 
the leaves of $\F$ become {\em infinitely many infinite Loch Ness monsters} or {\em double Jacob's ladders}, see Figure~\ref{fig:leaves}.
Now, let us discuss if $M$ admits some geometry or not by distinguishing several cases:
\end{example}

\begin{list}{\labelitemi}{\leftmargin=17pt}
\item[(i)\;] Assume that $\gamma_0$ is a fiber of the trivial fibration from $M_0 = \T^2 \times \Sp^1$ onto $\T^2$ (up to isotopy), and then $\partial V_0$ is fibered by \lq parallels\rq. If the gluing map $\psi$ preserves the product structure of $\partial M_1$ and $\partial M_2$, then $M$ has a structure of trivial $\Sp^1$-bundle over $\T^2 \# \Sigma$ having a $\H^2 \times \R$ geometry. However, if $\psi$ does not preserve the fibration by  \lq parallels\rq, then $M$ is a \emph{graph-manifold}, i.e. having a nontrivial toral
decomposition into Seifert fibered pieces. Indeed, both pieces $M_1$ and $M_2$ admit Seifert fibered structures inducing the same Seifert fibered structure on their boundaries $\partial M_1 \cong \Sp^1 \times \Sp^1$ and $\partial M_2 \cong \Sp^1 \times \Sp^1$. But this structure is not preserved by $\psi$. By gluing $\partial M_1$ and $\partial M_2$, we obtain an incompressible torus $T$ in $M$. Up to isotopy, we can assume that $T$ belongs to a family of incompressible tori defining a toral decomposition of $M$. The torus $T$ is the common boundary of two Seifert fibered pieces. But since $\partial M_1$ and $\partial M_2$ admit a unique Seifert fibered structure (up to isotopy) induced by the Seifert fibered structure of $M_1$ and $M_2$ \cite[Lemma 1.15]{Hatcher}, we deduce that $M$ is not a Seifert fibered manifold. 

\item[(ii)] In general, $\gamma_0$ remains transverse to a linear foliation $\mathcal{H}_0$ by toral leaves, a perturbation of $\F_0$. By changing coordinates of $M_0$, we can assume that $\mathcal{H}_0$ is the horizontal foliation of  $M_0 = \T^2 \times \Sp^1$. There is a $1$-dimensional foliation $\mathcal{L}_0$ transverse to $\mathcal{H}_0$ which admits $\gamma_0$ as leaf. If we pick a leaf of $\mathcal{H}_0$, homeomorphic to $\T^2$, then $\mathcal{L}_0$ is the suspension of a homeomorphism $\varphi$ of the torus $T^2$ with $r$ marked points given from the intersection of $\gamma_0$. Therefore the piece $M_1$ admits a Seifert fibered structure, a nontrivial toral decomposition, or a hyperbolic structure depending on the mapping class of $\varphi$ is periodic, reducible, or pseudo-Anosov. In the last case, the manifold $M$ consists of a hyperbolic piece and a Seifert piece, and does not admit any geometric structure. 
If $M_1$ has a Seifert fibered structure, as in the case (i), the manifold $M$ is a graph-manifold where the gluing map $\psi$ does not preserve the Seifert fibered structure of $\partial M_1$ and $\partial M_2$. But 
if $\psi$ preserves the Seifert fibered structure of $\partial M_1$ and $\partial M_2$, then $M$ admits a Seifert fibered structure (having a $\H^2 \times \R$ geometry). In fact,
as $\F$ is a foliation without holonomy transverse to the fibration, its Euler class is zero.  Indeed, we can apply \cite[Theorem 3.4]{EHN}) to the transverse fibrations over $S^1$ that approach $\F$, or directly factorize the holonomy representation (in the sense of \cite[Theorem 3.5]{EHN}) through $SO(2)$.
\end{list}

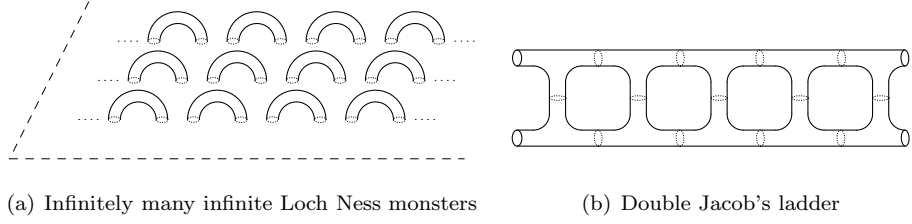
\begin{figure}[t]
\subfigure[Infinitely many infinite Loch Ness monsters]{
\begin{tikzpicture}[line cap=round,line join=round,>=triangle 45,x=1.0cm,y=1.0cm, scale=0.26]
\clip(-4,-5) rectangle (19.5,4.5);
\draw [line width=0.2pt,dash pattern=on 3pt off 3pt] (0,4)-- (-4,-4);
\draw [line width=0.2pt,dash pattern=on 3pt off 3pt] (-4,-4)-- (19,-4);
\draw [shift={(2.5,-2)}] plot[domain=0:pi,variable=\t]({1*1.5*cos(\t r)+0*1.5*sin(\t r)},{0*1.5*cos(\t r)+1*1.5*sin(\t r)});
\draw [shift={(2.51,-2.01)}] plot[domain=-0.02:3.13,variable=\t]({1*0.91*cos(\t r)+0*0.91*sin(\t r)},{0*0.91*cos(\t r)+1*0.91*sin(\t r)});
\draw [rotate around={0.36:(1.3,-2)},dash pattern=on 0.1pt off 0.8pt] (1.3,-2) ellipse (0.3cm and 0.14cm);
\draw [rotate around={0.36:(3.71,-2.01)},dash pattern=on 0.1pt off 0.8pt] (3.71,-2.01) ellipse (0.3cm and 0.14cm);
\draw [shift={(6.5,-2)}] plot[domain=0:pi,variable=\t]({1*1.5*cos(\t r)+0*1.5*sin(\t r)},{0*1.5*cos(\t r)+1*1.5*sin(\t r)});
\draw [shift={(6.51,-2.01)}] plot[domain=-0.02:3.13,variable=\t]({1*0.91*cos(\t r)+0*0.91*sin(\t r)},{0*0.91*cos(\t r)+1*0.91*sin(\t r)});
\draw [rotate around={0.36:(5.3,-2)},dash pattern=on 0.1pt off 0.8pt] (5.3,-2) ellipse (0.3cm and 0.14cm);
\draw [rotate around={0.36:(7.71,-2.01)},dash pattern=on 0.1pt off 0.8pt] (7.71,-2.01) ellipse (0.3cm and 0.14cm);
\draw [shift={(10.5,-2)}] plot[domain=0:pi,variable=\t]({1*1.5*cos(\t r)+0*1.5*sin(\t r)},{0*1.5*cos(\t r)+1*1.5*sin(\t r)});
\draw [shift={(10.51,-2.01)}] plot[domain=-0.02:3.13,variable=\t]({1*0.91*cos(\t r)+0*0.91*sin(\t r)},{0*0.91*cos(\t r)+1*0.91*sin(\t r)});
\draw [rotate around={0.36:(9.3,-2)},dash pattern=on 0.1pt off 0.8pt] (9.3,-2) ellipse (0.3cm and 0.14cm);
\draw [rotate around={0.36:(11.71,-2.01)},dash pattern=on 0.1pt off 0.8pt] (11.71,-2.01) ellipse (0.3cm and 0.14cm);
\draw [shift={(14.5,-2)}] plot[domain=0:pi,variable=\t]({1*1.5*cos(\t r)+0*1.5*sin(\t r)},{0*1.5*cos(\t r)+1*1.5*sin(\t r)});
\draw [shift={(14.51,-2.01)}] plot[domain=-0.02:3.13,variable=\t]({1*0.91*cos(\t r)+0*0.91*sin(\t r)},{0*0.91*cos(\t r)+1*0.91*sin(\t r)});
\draw [rotate around={0.36:(13.3,-2)},dash pattern=on 0.1pt off 0.8pt] (13.3,-2) ellipse (0.3cm and 0.14cm);
\draw [rotate around={0.36:(15.71,-2.01)},dash pattern=on 0.1pt off 0.8pt] (15.71,-2.01) ellipse (0.3cm and 0.14cm);
\draw [shift={(3.5,0)}] plot[domain=0:pi,variable=\t]({1*1.5*cos(\t r)+0*1.5*sin(\t r)},{0*1.5*cos(\t r)+1*1.5*sin(\t r)});
\draw [shift={(3.51,-0.01)}] plot[domain=-0.02:3.13,variable=\t]({1*0.91*cos(\t r)+0*0.91*sin(\t r)},{0*0.91*cos(\t r)+1*0.91*sin(\t r)});
\draw [rotate around={0.36:(2.3,0)},dash pattern=on 0.1pt off 0.8pt] (2.3,0) ellipse (0.3cm and 0.14cm);
\draw [rotate around={0.36:(4.71,-0.01)},dash pattern=on 0.1pt off 0.8pt] (4.71,-0.01) ellipse (0.3cm and 0.14cm);
\draw [shift={(7.5,0)}] plot[domain=0:pi,variable=\t]({1*1.5*cos(\t r)+0*1.5*sin(\t r)},{0*1.5*cos(\t r)+1*1.5*sin(\t r)});
\draw [shift={(7.51,-0.01)}] plot[domain=-0.02:3.13,variable=\t]({1*0.91*cos(\t r)+0*0.91*sin(\t r)},{0*0.91*cos(\t r)+1*0.91*sin(\t r)});
\draw [rotate around={0.36:(6.3,0)},dash pattern=on 0.1pt off 0.8pt] (6.3,0) ellipse (0.3cm and 0.14cm);
\draw [rotate around={0.36:(8.71,-0.01)},dash pattern=on 0.1pt off 0.8pt] (8.71,-0.01) ellipse (0.3cm and 0.14cm);
\draw [shift={(11.5,0)}] plot[domain=0:pi,variable=\t]({1*1.5*cos(\t r)+0*1.5*sin(\t r)},{0*1.5*cos(\t r)+1*1.5*sin(\t r)});
\draw [shift={(11.51,-0.01)}] plot[domain=-0.02:3.13,variable=\t]({1*0.91*cos(\t r)+0*0.91*sin(\t r)},{0*0.91*cos(\t r)+1*0.91*sin(\t r)});
\draw [rotate around={0.36:(10.3,0)},dash pattern=on 0.1pt off 0.8pt] (10.3,0) ellipse (0.3cm and 0.14cm);
\draw [rotate around={0.36:(12.71,-0.01)},dash pattern=on 0.1pt off 0.8pt] (12.71,-0.01) ellipse (0.3cm and 0.14cm);
\draw [shift={(15.5,0)}] plot[domain=0:pi,variable=\t]({1*1.5*cos(\t r)+0*1.5*sin(\t r)},{0*1.5*cos(\t r)+1*1.5*sin(\t r)});
\draw [shift={(15.51,-0.01)}] plot[domain=-0.02:3.13,variable=\t]({1*0.91*cos(\t r)+0*0.91*sin(\t r)},{0*0.91*cos(\t r)+1*0.91*sin(\t r)});
\draw [rotate around={0.36:(14.3,0)},dash pattern=on 0.1pt off 0.8pt] (14.3,0) ellipse (0.3cm and 0.14cm);
\draw [rotate around={0.36:(16.71,-0.01)},dash pattern=on 0.1pt off 0.8pt] (16.71,-0.01) ellipse (0.3cm and 0.14cm);
\draw [shift={(4.5,2)}] plot[domain=0:pi,variable=\t]({1*1.5*cos(\t r)+0*1.5*sin(\t r)},{0*1.5*cos(\t r)+1*1.5*sin(\t r)});
\draw [shift={(4.51,1.99)}] plot[domain=-0.02:3.13,variable=\t]({1*0.91*cos(\t r)+0*0.91*sin(\t r)},{0*0.91*cos(\t r)+1*0.91*sin(\t r)});
\draw [rotate around={0.36:(3.3,2)},dash pattern=on 0.1pt off 0.8pt] (3.3,2) ellipse (0.3cm and 0.14cm);
\draw [rotate around={0.36:(5.71,1.99)},dash pattern=on 0.1pt off 0.8pt] (5.71,1.99) ellipse (0.3cm and 0.14cm);
\draw [shift={(8.5,2)}] plot[domain=0:pi,variable=\t]({1*1.5*cos(\t r)+0*1.5*sin(\t r)},{0*1.5*cos(\t r)+1*1.5*sin(\t r)});
\draw [shift={(8.51,1.99)}] plot[domain=-0.02:3.13,variable=\t]({1*0.91*cos(\t r)+0*0.91*sin(\t r)},{0*0.91*cos(\t r)+1*0.91*sin(\t r)});
\draw [rotate around={0.36:(7.3,2)},dash pattern=on 0.1pt off 0.8pt] (7.3,2) ellipse (0.3cm and 0.14cm);
\draw [rotate around={0.36:(9.71,1.99)},dash pattern=on 0.1pt off 0.8pt] (9.71,1.99) ellipse (0.3cm and 0.14cm);
\draw [shift={(12.5,2)}] plot[domain=0:pi,variable=\t]({1*1.5*cos(\t r)+0*1.5*sin(\t r)},{0*1.5*cos(\t r)+1*1.5*sin(\t r)});
\draw [shift={(12.51,1.99)}] plot[domain=-0.02:3.13,variable=\t]({1*0.91*cos(\t r)+0*0.91*sin(\t r)},{0*0.91*cos(\t r)+1*0.91*sin(\t r)});
\draw [rotate around={0.36:(11.3,2)},dash pattern=on 0.1pt off 0.8pt] (11.3,2) ellipse (0.3cm and 0.14cm);
\draw [rotate around={0.36:(13.71,1.99)},dash pattern=on 0.1pt off 0.8pt] (13.71,1.99) ellipse (0.3cm and 0.14cm);
\draw [shift={(16.5,2)}] plot[domain=0:pi,variable=\t]({1*1.5*cos(\t r)+0*1.5*sin(\t r)},{0*1.5*cos(\t r)+1*1.5*sin(\t r)});
\draw [shift={(16.51,1.99)}] plot[domain=-0.02:3.13,variable=\t]({1*0.91*cos(\t r)+0*0.91*sin(\t r)},{0*0.91*cos(\t r)+1*0.91*sin(\t r)});
\draw [rotate around={0.36:(15.3,2)},dash pattern=on 0.1pt off 0.8pt] (15.3,2) ellipse (0.3cm and 0.14cm);
\draw [rotate around={0.36:(17.71,1.99)},dash pattern=on 0.1pt off 0.8pt] (17.71,1.99) ellipse (0.3cm and 0.14cm);
\draw [line width=0.3pt,dotted] (1.5,2)-- (2.5,2);
\draw [line width=0.3pt,dotted] (0.5,0)-- (1.5,0);
\draw [line width=0.3pt,dotted] (-0.5,-2)-- (0.5,-2);
\draw [line width=0.3pt,dotted] (17.5,-2)-- (16.5,-2);
\draw [line width=0.3pt,dotted] (18.5,0)-- (17.5,0);
\draw [line width=0.3pt,dotted] (19.5,2)-- (18.5,2);
\end{tikzpicture}
}
\subfigure[Double Jacob's ladder]{
\begin{tikzpicture}[line cap=round,line join=round,>=triangle 45,x=1.0cm,y=1.0cm, scale=0.425]
\clip(-0.5,-1) rectangle (12.5,4); 
\draw (0,0)-- (12,0);
\draw (0,3)-- (12,3);
\draw (0,2.5)-- (0.5,2.5);
\draw [shift={(0.5,2)}] plot[domain=0:1.57,variable=\t]({1*0.5*cos(\t r)+0*0.5*sin(\t r)},{0*0.5*cos(\t r)+1*0.5*sin(\t r)});
\draw (1,2)-- (1,1);
\draw (1.5,1)-- (1.5,2);
\draw (2,2.5)-- (3,2.5);
\draw (2,0.5)-- (3,0.5);
\draw (3.5,2)-- (3.5,1);
\draw [shift={(2,2)}] plot[domain=1.57:pi,variable=\t]({1*0.5*cos(\t r)+0*0.5*sin(\t r)},{0*0.5*cos(\t r)+1*0.5*sin(\t r)});
\draw [shift={(2,1)}] plot[domain=pi:4.71,variable=\t]({1*0.5*cos(\t r)+0*0.5*sin(\t r)},{0*0.5*cos(\t r)+1*0.5*sin(\t r)});
\draw [shift={(3,1)}] plot[domain=-1.57:0,variable=\t]({1*0.5*cos(\t r)+0*0.5*sin(\t r)},{0*0.5*cos(\t r)+1*0.5*sin(\t r)});
\draw [shift={(3,2)}] plot[domain=0:1.57,variable=\t]({1*0.5*cos(\t r)+0*0.5*sin(\t r)},{0*0.5*cos(\t r)+1*0.5*sin(\t r)});
\draw (0,0.5)-- (0.5,0.5);
\draw [shift={(0.5,1)}] plot[domain=-1.57:0,variable=\t]({1*0.5*cos(\t r)+0*0.5*sin(\t r)},{0*0.5*cos(\t r)+1*0.5*sin(\t r)});
\draw (4,1)-- (4,2);
\draw (4.5,2.5)-- (5.5,2.5);
\draw (4.5,0.5)-- (5.5,0.5);
\draw (6,2)-- (6,1);
\draw [shift={(4.5,2)}] plot[domain=1.57:pi,variable=\t]({1*0.5*cos(\t r)+0*0.5*sin(\t r)},{0*0.5*cos(\t r)+1*0.5*sin(\t r)});
\draw [shift={(4.5,1)}] plot[domain=pi:4.71,variable=\t]({1*0.5*cos(\t r)+0*0.5*sin(\t r)},{0*0.5*cos(\t r)+1*0.5*sin(\t r)});
\draw [shift={(5.5,1)}] plot[domain=-1.57:0,variable=\t]({1*0.5*cos(\t r)+0*0.5*sin(\t r)},{0*0.5*cos(\t r)+1*0.5*sin(\t r)});
\draw [shift={(5.5,2)}] plot[domain=0:1.57,variable=\t]({1*0.5*cos(\t r)+0*0.5*sin(\t r)},{0*0.5*cos(\t r)+1*0.5*sin(\t r)});
\draw (6.5,1)-- (6.5,2);
\draw (7,2.5)-- (8,2.5);
\draw (7,0.5)-- (8,0.5);
\draw (8.5,2)-- (8.5,1);
\draw [shift={(7,2)}] plot[domain=1.57:pi,variable=\t]({1*0.5*cos(\t r)+0*0.5*sin(\t r)},{0*0.5*cos(\t r)+1*0.5*sin(\t r)});
\draw [shift={(7,1)}] plot[domain=pi:4.71,variable=\t]({1*0.5*cos(\t r)+0*0.5*sin(\t r)},{0*0.5*cos(\t r)+1*0.5*sin(\t r)});
\draw [shift={(8,1)}] plot[domain=-1.57:0,variable=\t]({1*0.5*cos(\t r)+0*0.5*sin(\t r)},{0*0.5*cos(\t r)+1*0.5*sin(\t r)});
\draw [shift={(8,2)}] plot[domain=0:1.57,variable=\t]({1*0.5*cos(\t r)+0*0.5*sin(\t r)},{0*0.5*cos(\t r)+1*0.5*sin(\t r)});
\draw (9,1)-- (9,2);
\draw (9.5,2.5)-- (10.5,2.5);
\draw (9.5,0.5)-- (10.5,0.5);
\draw (11,2)-- (11,1);
\draw [shift={(9.5,2)}] plot[domain=1.57:pi,variable=\t]({1*0.5*cos(\t r)+0*0.5*sin(\t r)},{0*0.5*cos(\t r)+1*0.5*sin(\t r)});
\draw [shift={(9.5,1)}] plot[domain=pi:4.71,variable=\t]({1*0.5*cos(\t r)+0*0.5*sin(\t r)},{0*0.5*cos(\t r)+1*0.5*sin(\t r)});
\draw [shift={(10.5,1)}] plot[domain=-1.57:0,variable=\t]({1*0.5*cos(\t r)+0*0.5*sin(\t r)},{0*0.5*cos(\t r)+1*0.5*sin(\t r)});
\draw [shift={(10.5,2)}] plot[domain=0:1.57,variable=\t]({1*0.5*cos(\t r)+0*0.5*sin(\t r)},{0*0.5*cos(\t r)+1*0.5*sin(\t r)});
\draw (11.5,2)-- (11.5,1);
\draw [shift={(12,2)}] plot[domain=1.57:pi,variable=\t]({1*0.5*cos(\t r)+0*0.5*sin(\t r)},{0*0.5*cos(\t r)+1*0.5*sin(\t r)});
\draw [shift={(12,1)}] plot[domain=pi:4.71,variable=\t]({1*0.5*cos(\t r)+0*0.5*sin(\t r)},{0*0.5*cos(\t r)+1*0.5*sin(\t r)});
\draw [rotate around={90:(0,2.75)}] (0,2.75) ellipse (0.25cm and 0.14cm);
\draw [rotate around={90:(0,0.25)}] (0,0.25) ellipse (0.25cm and 0.15cm);
\draw [rotate around={0:(1.25,1.5)},dash pattern=on 0.1pt off 0.8pt] (1.25,1.5) ellipse (0.26cm and 0.07cm);
\draw [rotate around={0:(3.75,1.5)},dash pattern=on 0.1pt off 0.8pt] (3.75,1.5) ellipse (0.26cm and 0.07cm);
\draw [rotate around={0:(6.25,1.5)},dash pattern=on 0.1pt off 0.8pt] (6.25,1.5) ellipse (0.26cm and 0.07cm);
\draw [rotate around={0:(8.75,1.5)},dash pattern=on 0.1pt off 0.8pt] (8.75,1.5) ellipse (0.26cm and 0.07cm);
\draw [rotate around={0:(11.25,1.5)},dash pattern=on 0.1pt off 0.8pt] (11.25,1.5) ellipse (0.26cm and 0.07cm);
\draw [rotate around={-87.89:(2.51,0.25)},dash pattern=on 0.1pt off 0.8pt] (2.51,0.25) ellipse (0.25cm and 0.12cm);
\draw [rotate around={-87.89:(5.04,0.25)},dash pattern=on 0.1pt off 0.8pt] (5.04,0.25) ellipse (0.25cm and 0.12cm);
\draw [rotate around={-87.89:(7.53,0.25)},dash pattern=on 0.1pt off 0.8pt] (7.53,0.25) ellipse (0.25cm and 0.12cm);
\draw [rotate around={-87.89:(10.02,0.25)},dash pattern=on 0.1pt off 0.8pt] (10.02,0.25) ellipse (0.25cm and 0.12cm);
\draw [rotate around={-87.89:(2.51,2.74)},dash pattern=on 0.1pt off 0.8pt] (2.51,2.74) ellipse (0.25cm and 0.12cm);
\draw [rotate around={-87.89:(5.05,2.75)},dash pattern=on 0.1pt off 0.8pt] (5.05,2.75) ellipse (0.25cm and 0.12cm);
\draw [rotate around={-87.89:(7.51,2.75)},dash pattern=on 0.1pt off 0.8pt] (7.51,2.75) ellipse (0.25cm and 0.12cm);
\draw [rotate around={-87.89:(10.04,2.75)},dash pattern=on 0.1pt off 0.8pt] (10.04,2.75) ellipse (0.25cm and 0.12cm);
\draw [rotate around={-87.89:(12,2.75)}] (12,2.75) ellipse (0.25cm and 0.12cm);
\draw [rotate around={-87.89:(12.01,0.25)}] (12.01,0.25) ellipse (0.25cm and 0.12cm);
\end{tikzpicture}
}
\caption{Leaves obtained by gluing handles to linear foliations}
\label{fig:leaves}
\end{figure}

\begin{example} \label{ex:surgery2}
Take a torus bundle $M_0$ over $\Sp^1$ with 
linear monodromy of type \eqref{nilmatrix}. This is a $\Sp^1$-bundle over $\T^2$ and its Euler class is $n$. We endow $M_0$ with a foliation $\F_0$ by dense cylinders using a  procedure due to Hector \cite{Hector}. To do so, we take
the product $\T^2 \times [0,1]$ foliated by the fibers of the map $p_2(x,y,t) = y$. 
We choose a diffeomorphim $\varphi : \T^2 \to \T^2$ sending meridians to meridians which projects to a diffeomorphism 
$\overline{\varphi}: \Sp^1 \to \Sp^1$.  If its rotation number is irrational, the quotient 
$M_\varphi = \T^2 \times [0,1] / (x,y,0) \sim (\varphi(x,y),1)$ inherits a minimal foliation by cylinders. Moreover, if $\varphi_\ast$ 
is given by \eqref{nilmatrix}, then $M_\varphi$ is homeomorphic to $M_0$. 
By Dehn surgery on a closed transversal $\gamma_0$, 
we have a foliated manifold $(M,\F)$ whose leaves have two nonplanar 
ends. 
Any parallel $p$ in the torus $T^2 \times \{0\}$ defines such a transversal $\gamma_0$ that turns exactly once in the $p$ direction. But this transversal is also induced by its image $\varphi_\ast(p) = p + nm$ that turns $n$ times in the 
$\Sp^1$ or $m$-direction.
If $n \neq 0$,
then $M$ is a graph-manifold by the same argument used in 
 Example~\ref{ex:surgery1}.
A similar discussion applies when $\gamma_0$ is any other closed transversal to $\F_0$.
\end{example}

\subsection{Gluing suspensions along linear foliations}
We propose another method to construct minimal foliations with leaves of negative curvature on graph-manifolds.
Since $PSL(2,\R)$ is a perfect group, any irrational rotation $R_\theta$ is a product of commutators, namely 
$$R_\theta=[f_1,h_1]\cdots[f_g, h_g]$$
for 
any $g \geq 1$. Let $\Sigma$ be the closed surface of genus $g$ and take $\Sigma_1 = \Sigma - \inte{D}$. 
The fundamental group $\Gamma_1=\pi_1(\Sigma_1)$  is a free group generated by classes $\alpha_i$ and $\beta_i$ with $1 \leq i \leq g$. Then one has a representation 
$$\rho_1: \Gamma_1 \to PSL(2,\R)\subset \rm{Diffeo}^\infty_+ (\Sp^1)$$
which sends $\alpha_i$ to $f_i$ and $\beta_i$ to $h_i$ and hence the product of commutators is sent to $R_\theta$. 
If we suspend $\rho_1$, we obtain a foliation
$\F_1$ on $M_1 = \Sigma_1 \times\Sp^1$ such that $\F_1|_{\partial M_1}$ is conjugated to the linear foliation $\F_\theta$ defined by $R_\theta$. 
The simple curve $\gamma_0$ representing the sum of $m$ and $p$ in $\pi_1(T^2)$ is transverse to  $\F_\theta$ and hence we  
can think of
$\F_\theta$  as the suspension of a diffeomorphism $f : \Gamma_0 \to\Gamma_0$ where $\Gamma_0$ is the image of $\gamma_0$. Since 
$\rm{Diffeo}^\infty_+(\Gamma_0) \cong \rm{Diffeo}^\infty_+(\Sp^1)$ is also perfect \cite[Corollaire 5.3]{Herman}, we can write $f$ as a product of commutators, namely 
$$f=[k_1,l_1]\cdots[k_n,l_n]$$
for any $n \geq 1$. 
Denoting by $\Gamma_2=\pi_1(\Sigma_2)$ the fundamental group of the surface of genus $n$ with the interior of a disk removed, 
there is a new representation 
$$\rho_2 : \Gamma_2 \to \rm{Diffeo}^\infty_+(\Gamma_0) \cong \rm{Diffeo}^\infty_+(\Sp^1)$$
which sends  $\alpha_i$ and $\beta_i$ (with $1 \leq i \leq n$) to $k_i$ and $l_i$ respectively.  If we suspend $\rho_2$,  we have a foliation $\F_2$ on $M_2 =\Sigma_2\times\Gamma_0$ which is again transverse to $\partial M_2$  and such that $\F_2|_{\partial M_2}$ is conjugate to the suspension of $f$. Finally, we 
can glue together $M_1$ and $M_2$ along the boundary by a $C^\infty$ diffeomorphism $\psi: \T^2 \to \T^2 $ which sends $\F_\theta$ to the 
suspension of $f$. 
Thus, we have a manifold $M$ endowed with a minimal foliation $\F$, but the gluing map doesn't 
preserve the Seifert fibrations on $\partial M_1$ and $\partial M_2$. 
Since the leaves of $\F$ are made of pieces, which are leaves of $\F_1$ and $\F_2$ having exponential growth with respect to the metrics induced on $M_1$ and $M_2$ by any Riemannian metric on $M$, we deduce:

\begin{proposition}
 Let $(M,\F)$ be a foliated 3-manifold which is obtained by gluing two suspensions along a linear foliation. Then $M$ is a graph-manifold and the leaves of $\F$ are hyperbolic surfaces. \qed
\end{proposition}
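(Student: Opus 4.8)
The statement has two independent parts, and I would prove them separately. For \emph{$M$ is a graph--manifold} the argument is the topological one already used for Proposition~\ref{prop:graph}. The two building blocks $M_1=\Sigma_1\times\Sp^1$ and $M_2=\Sigma_2\times\Gamma_0$ are trivial circle bundles over the compact surfaces $\Sigma_1,\Sigma_2$, and since each $\Sigma_i$ is a positive--genus surface with an open disc removed, $\chi(\Sigma_i)<0$; hence each $M_i$ is a Seifert fibred manifold over a hyperbolic $2$--orbifold with nonempty boundary, and by \cite[Lemma 1.15]{Hatcher} its Seifert structure is unique up to isotopy. The boundary curve $\partial\Sigma_i$ is $\pi_1$--injective in $\Sigma_i$, so the gluing torus $T=\partial M_1=\partial M_2$ is incompressible in each $M_i$, hence in $M$; and since each $M_i$ is aspherical with incompressible boundary, $M$ is irreducible. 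It therefore remains only to check that the two Seifert fibrations are incompatible along $T$.

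For this I would track the gluing map $\psi$ on $H_1(T)$. Writing $m=[\partial\Sigma_1\times\{*\}]$ and $p=[\{*\}\times\Sp^1]$ in $H_1(\partial M_1)\cong\Z^2$, the Seifert fibre of $M_1$ is $p$. By construction the transversals of $\F_\theta=\F_1|_{\partial M_1}$ are the circles isotopic to $\gamma_0$, that is, the class $m+p$, while the transversals of $\F_2|_{\partial M_2}$, regarded as the suspension of $f$, are exactly the fibres $\{*\}\times\Gamma_0$, that is, the Seifert fibre of $M_2$. Since $\psi$ carries $\F_\theta$ onto the suspension of $f$ it sends transversals to transversals, so $\psi_\ast(m+p)=\pm(\text{Seifert fibre of }M_2)$; if the Seifert structures matched along $T$ we would also have $\psi_\ast(p)=\pm(\text{Seifert fibre of }M_2)$, forcing $m+p\equiv\pm p$ in $H_1(T)$, which is absurd. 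Thus no Seifert fibration of $M$ restricts correctly to both pieces, $T$ is an essential torus in the irreducible manifold $M$, and $M$ is a graph--manifold with nontrivial toral decomposition into $M_1$ and $M_2$.

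For \emph{the leaves of $\F$ are hyperbolic surfaces} I would show that $\F$ carries no holonomy--invariant transverse measure and then invoke the uniformization theorem for laminations. A leaf of $\F$ is assembled from pieces which are leaves of $\F_1$ in $M_1$ and of $\F_2$ in $M_2$, i.e.\ leaves of the suspensions of $\rho_1$ and $\rho_2$, hence coverings of $\Sigma_1$ and $\Sigma_2$; since $\rho_1(\Gamma_1)$ contains $R_\theta=\rho_1([\alpha_1,\beta_1]\cdots[\alpha_g,\beta_g])$ and $\rho_2(\Gamma_2)$ contains $f=\rho_2([\alpha_1,\beta_1]\cdots[\alpha_n,\beta_n])$, both with irrational rotation number and so without periodic orbit on the transverse circle, none of these coverings is finite, so every leaf of $\F$ is built from infinite coverings of the hyperbolic surfaces $\Sigma_i$ and, for the metric induced by any Riemannian metric on $M$, has exponential growth. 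Now suppose $\mu\neq0$ is a holonomy--invariant transverse measure for $\F$. Its restriction to a transversal of $\F_1$ inside the interior of $M_1$ (a fibre $\{*\}\times\Sp^1$ with $*\in\inte{\Sigma}_1$) is a $\rho_1(\Gamma_1)$--invariant Borel measure on $\Sp^1$; but $R_\theta$ is uniquely ergodic, so this measure would be Lebesgue in the affine coordinate, and an orientation--preserving circle diffeomorphism preserving Lebesgue measure is a rotation, so every $f_i,h_i$ would be a rotation and $R_\theta=[f_1,h_1]\cdots[f_g,h_g]=\mathrm{id}$, a contradiction. The same argument in $M_2$ (using that $f$ is conjugate to an irrational rotation, hence uniquely ergodic) forbids an invariant measure supported in $\inte{M}_2$; as every transversal of $\F$ is covered by sub--arcs lying in $\inte{M}_1$ or $\inte{M}_2$, we get $\mu=0$. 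Hence $\F$ has no holonomy--invariant transverse measure, and by Candel's uniformization theorem (see \cite{Candel} and \cite{Verjovsky}) $\F$ admits a continuously varying leafwise metric of constant curvature $-1$; in particular every leaf is a hyperbolic surface. (The absence of an invariant transverse measure is also the formal counterpart, via Plante's theorem, of the exponential growth of the leaves noted just before the statement.)

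The main obstacle is the homological bookkeeping in the first part: one must be sure that the condition ``$\psi$ sends $\F_\theta$ to the suspension of $f$'' really pins down $\psi_\ast$ on the relevant line of $H_1(T)$ and that this genuinely contradicts matching the Seifert fibres — the computation above does this, and it uses both the choice of $\gamma_0$ as the diagonal curve $m+p$ and the uniqueness of the Seifert structures of the two pieces. In the second part the only substantive input beyond the soft functoriality of invariant transverse measures is that the commutator identities $[f_1,h_1]\cdots[f_g,h_g]=R_\theta$ and $[k_1,l_1]\cdots[k_n,l_n]=f$ actually obstruct invariance of Lebesgue measure; the rest is a direct appeal to \cite{Candel}.
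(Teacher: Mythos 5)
Your proposal is correct in substance, substantially more detailed than the paper's (which is essentially the paragraph preceding the \texttt{\textbackslash qed}), and it follows the same two-part scheme; but there is one imprecision in Part~1 and Part~2 takes a related but formally different route, so let me comment on both.

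For the graph-manifold part, your claim that ``the transversals of $\F_\theta$ are the circles isotopic to $\gamma_0$'' is not literally true: an irrational linear foliation of $\T^2$ has closed transversals in every primitive homology class not proportional to the leaf direction, so ``sends transversals to transversals'' does not by itself pin down $\psi_\ast(m+p)$. What does pin it down is the way $\psi$ is built: the paper passes from $\F_\theta$ to its \emph{suspension presentation} over the specific circle $\gamma_0$ (whose first-return map is $f$), and $\F_2|_{\partial M_2}$ is by construction the suspension of the same $f$ over $\Gamma_0$; the gluing $\psi$ is chosen to match these two suspension structures (base circle and flow direction), so indeed $\psi_\ast[\gamma_0]=\pm[\Gamma_0]$. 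With this justification your homological computation is valid and gives the contradiction you want. (Alternatively: the foliation constraint forces $\psi_\ast$ to send the leaf-direction $(1,\theta)$ to the leaf-direction $(1,\mu)$ with $\mu=\theta/(1-\theta)$, which for a generic choice of irrational $\theta$ determines $\psi_\ast$ up to sign, and one checks directly that $\psi_\ast(p)\neq\pm p'$.) The rest of Part~1 — uniqueness of the Seifert structure on each piece via \cite[Lemma 1.15]{Hatcher}, incompressibility and irreducibility — is the same as what the paper implicitly borrows from Proposition~\ref{prop:graph}.

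For the hyperbolicity of the leaves the paper simply observes that the leaves, being assembled from pieces of leaves of the two suspensions, have exponential growth, and deduces hyperbolicity; you instead show there is no holonomy-invariant transverse measure and quote Candel's uniformization theorem. Your argument via unique ergodicity of $R_\theta$ (forcing an invariant measure to be Lebesgue on a complete transversal, forcing $\rho_1(\Gamma_1)$ to consist of rotations, contradicting $[f_1,h_1]\cdots[f_g,h_g]=R_\theta\neq\mathrm{id}$) is correct and self-contained, and as you note the two statements (exponential growth vs.\ no invariant measure) are linked by Plante's theorem. In fact, ``exponential growth of leaves $\Rightarrow$ hyperbolic'' itself ultimately passes through Candel's theorem (a conformally parabolic leaf would have a flat Candel metric quasi-isometric to the induced one, hence polynomial growth), so the two proofs are essentially the same in content; yours makes the appeal to \cite{Candel} explicit, which is a small improvement in rigor. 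One small tidying remark: once you know $\mu$ vanishes on one \emph{complete} transversal $\{\ast\}\times\Sp^1\subset\inte{M_1}$ (which is complete because $\F$ is minimal), you already have $\mu=0$; the final sentence about covering every transversal by arcs in $\inte{M_1}$ or $\inte{M_2}$ is unnecessary.
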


\subsection{Hirsch surgery} A third method to construct minimal foliations by hyperbolic surfaces on graph-manifolds is inspired by the classical Hirsch foliation. 
We start with two tori $\T^3$ each endowed with a linear foliation by cylinders 
transverse to the trivial fibration over $\T^2$,  and we construct two foliated mani\-folds $(M_1,\F_1)$ and 
$(M_2,\F_2)$ with boundary as follows. We choose a fiber as closed transversal 
$\gamma_1$, and we remove the interior of a closed neighborhood $V_1 \cong D \times \Sp^1$ to have a foliation by cylinders 
with holes on a manifold with boundary $M_1$ thus constructed. As in the cut-and-paste construction, the boundary $\partial M_1 \cong \Sp^1 \times \Sp^1$ is transverse to the linear foliation and we can assume that the induced foliation is conjugate 
to the  horizontal foliation by circles $\Sp^1 \times \{ z \}$. 
Next, we choose a closed transversal $\gamma_2$ that 
turns two times in the fiber direction, and similarly we obtain another foliation by cylinders with holes on another manifold with boundary 
$M_2$. 
\medskip 

Finally, we take the closed unit disk $D_1 \subset \C$ minus the interior of two disks $D_2^\pm$ of radius $1/4$ 
centered at $\pm 1/2$, the product of a pair of pants $P = D_1 - \inte{D}_2^- \cup \inte{D}_2^+$ and $\Sp^1$, and the quotient 
$M_0 = P \times \Sp^1 / (Z,z) \sim (-Z,-z)$ with the foliation induced by the horizontal foliation of
$P \times \Sp^1$.
We attach $M_0$ to $M_1 \sqcup M_2$ via a diffeomorphism $\psi$ which sends the image  of
$\partial D_1 \times \Sp^1$ in $M_0$ onto $\partial M_1$ and the image of 
$\partial \inte{D}_2^-  \times \Sp^1\sqcup  \inte{D}_2^+  \times \Sp^1$ in $M_0$ onto $\partial M_2$ preserving the horizontal 
foliations on each torus. We denote by $\partial_{out} M_0$ and $\partial_{in} M_0$ these two components of $\partial M_0$. Thus, we have a minimal foliation $\F$ on a 
manifold $M$ which splits naturally into 
three 
pieces: 
$M_0$ and $M_1$ are Seifert fibered (with one exceptional fiber and without exceptional fibers respectively), but $M_2$ admits a Seifert fibered structure, a toral decomposition or a hyperbolic structure depending on the isotopy class of $\gamma_2$ (as in Example~\ref{ex:surgery1}.(ii)).  
As the gluing map $\psi$ does not preserve the Seifert fibered structures induced on $\partial_{out} M_0$ and $\partial M_1$, the toral decomposition of $M$ is always nontrivial, containing or not a hyperbolic piece. If $M_2$ is Seifert fibered and the Seifert fibered structure induced on $\partial M_2$ is isomorphic to that of $\partial_{in} M_0$, then $M$ is graph-manifold. We say that $(M,\F)$ is given by \emph{Hirsch surgery} from two linear foliations on $\T^3$.

\begin{proposition} \label{prop:Hirsch}
Let $(M,\F)$ be a foliated 3-manifold given by Hirsch surgery from two linear foliations on $\T^3$. 
 There is a choice of the gluing map $\psi$ such the all the leaves of $\F$ are geometrically infinite hyperbolic surfaces with one or two ends and trivial holonomy.
\end{proposition}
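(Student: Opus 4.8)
The plan is to read off the topology of a leaf from the surface pieces that compose it, to deduce conformal hyperbolicity from Candel's uniformization theorem, and then to locate all of the holonomy in the single ``doubling'' that the transversal $\gamma_2$ contributes. I would begin by describing a leaf $L$ of $\F$. Since $\F$ is minimal, $L$ is dense and so meets the interior of $M_1$, and any connected component of $L\cap M_1$ is of the form $\ell\cap M_1$ for some leaf $\ell$ of the original linear foliation on $\T^3$ --- a plane or a cylinder, hence an infinite covering of $\T^2$. As $\ell$ is an infinite cover of $\T^2$ and the transversal $\gamma_1$ is a fibre $\{q_0\}\times\Sp^1$, the intersection of $\ell$ with $\gamma_1$ has one point for each sheet, so $\ell$ meets the removed solid torus $V_1=D\times\Sp^1$ in infinitely many pairwise disjoint disks, whose boundary circles are not null-homotopic in $\ell\cap M_1$. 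Hence every such component is a plane (or a cylinder) with infinitely many disjoint open disks removed, and its fundamental group is free of infinite rank. The components of $L$ in $M_2$ admit an analogous description, and the components in $M_0$ are pairs of pants; every one of these surface pieces is aspherical, and each circle along which two pieces are glued is not null-homotopic in either of them. Presenting $\pi_1(L)$ as the fundamental group of the graph of groups whose vertex groups are the pieces and whose edge groups are the infinite cyclic groups generated by the gluing circles, all vertex groups inject; in particular $\pi_1(\ell\cap M_1)\hookrightarrow\pi_1(L)$, so $\pi_1(L)$ is infinitely generated and $L$ is neither a sphere, a plane, a cylinder, nor a torus.

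Next I would apply Candel's uniformization theorem \cite{Candel}: since $\F$ is a minimal foliation by surfaces of a closed manifold, either every leaf is uniformized by $\H^2$, or $\F$ carries a holonomy-invariant transverse measure for which almost every leaf is uniformized by $\C$ or by $\hat{\C}$ --- and such a leaf is, topologically, a plane, a cylinder, a torus or a sphere, which is excluded by the previous paragraph. Thus all leaves of $\F$ are uniformized by $\H^2$, so by \cite{Candel} and \cite{Verjovsky} the uniformization is continuous and the leafwise metrics have curvature $-1$: $\F$ is a foliation by hyperbolic surfaces. Writing a leaf as $\Gamma\backslash\H^2$, the group $\Gamma\cong\pi_1(L)$ is not finitely generated, hence not of finite type, and $L$ is geometrically infinite. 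This settles the first assertion.

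For the holonomy, I would analyze the holonomy pseudogroup of $\F$. The linear foliations on the two copies of $\T^3$ are suspensions of rotations and have no holonomy; deleting solid tori to form $M_1$ and $M_2$ creates none, and the foliation induced on $M_0=P\times\Sp^1/(Z,z)\sim(-Z,-z)$ is, away from this free $\Z/2$-action, a product foliation, so it has no holonomy either. Holonomy can therefore only be produced by the way $M_0$ attaches its two inner boundary tori to the single boundary torus of the piece $M_2$, whose transversal $\gamma_2$ turns twice in the $\Sp^1$-direction. As in the classical Hirsch foliation, an appropriate first-return map to a transversal circle is then, up to conjugacy, a rotation composed with a degree-two expanding map of $\Sp^1$. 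A leaf has nontrivial holonomy precisely when the orbit of its transversal point under the holonomy pseudogroup contains a periodic point of this return map; a periodic point of period $k$ satisfies an equation $z^{2^{k}-1}=\mathrm{const}$, so there are finitely many for each $k$ and countably many in all, and at each of them the return map has multiplier $2^{k}\neq 1$. The holonomy of the corresponding leaf is therefore generated by this single hyperbolic germ and is infinite cyclic, while every other leaf has trivial holonomy; hence there are countably many leaves with cyclic holonomy.

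I expect the last step to be the main obstacle: making precise the combinatorics of the holonomy pseudogroup, in particular isolating the degree-two return map coming from the interaction of the twice-turning transversal $\gamma_2$ with the $\Z/2$-quotient defining $M_0$, and checking that a periodic leaf cannot pick up further, independent holonomy elsewhere along its orbit. The remaining ingredients --- the asphericity and incompressibility of the surface pieces in the first step and the precise form of Candel's dichotomy in the second --- are routine.
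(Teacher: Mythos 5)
Your first step essentially amounts to the paper's one-line observation that every leaf picks up infinitely many pairs of pants (coming from the $M_0$ piece) approaching each end, which forces infinitely generated $\pi_1$ and hence geometric infiniteness; your graph-of-groups bookkeeping and appeal to uniformization are a more detailed version of that. (Candel's dichotomy isn't stated quite correctly --- the alternative to hyperbolicity is the existence of a transverse invariant measure of non-negative Euler characteristic, not that ``almost every leaf is uniformized by $\C$ or $\hat\C$'' --- but since each leaf individually has infinitely generated $\pi_1$, the classical uniformization theorem already forces every leaf to be hyperbolic, and Candel/Verjovsky then give continuity of the leafwise metric, so this is harmless.)

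The real gap is exactly the one you flag yourself, and it is not just a technicality. You reduce ``countably many leaves with cyclic holonomy'' to periodic points of a single first-return map of the form (rotation)$\circ$(doubling). But the holonomy pseudogroup reduced to the singular fiber $\gamma_0$ is generated by \emph{three} maps: the doubling $f(z)=z^2$ and the two rotations $R_\alpha$, $R_\beta$. A point's stabilizer is generated by \emph{all} words in these generators that fix it, not just powers of one return map; words such as $R_\alpha^n\scirc f$, $f\scirc R_\beta^m\scirc f$, etc.\ have their own fixed points, and a priori a single point could be fixed by several algebraically independent such words, producing non-cyclic holonomy. The paper closes this by invoking Seke's theorem \cite{BoboSeke}: since the foliation is transversely affine, its holonomy pseudogroup is (up to equivalence) generated by a subgroup $\Gamma\subset\mathrm{Aff}(\R)$, and because every generator has linear part a power of $2$, so does every word. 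The stabilizer of a point in $\mathrm{Aff}(\R)$ injects (via linear part) into $\{2^n:n\in\Z\}\cong\Z$, hence is trivial or infinite cyclic. Countability then comes for free because $\Gamma$ is countable and each non-identity affine map fixes at most one point. Without this affine rigidity --- or some explicit combinatorial control on which words can share a fixed point --- your argument does not rule out leaves with more complicated holonomy.
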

\begin{proof}
Take the singular fiber of $M_0$ as closed transversal $\gamma_0$ which meets all leaves, and identify it with $\Sp^1$ 
 (see Figure~\ref{fig:Hirsch}). We will describe the holonomy pseudogroup $\Gamma$ of the foliation $\F$ reduced to $\gamma_0$. To do so, as the meridians in $\partial M_i$, $\partial_{out}M_0$ and $\partial_{in}M_0$ have no holonomy, we can assume without loss of generality that $\psi$ preserves the Seifert fibered structures of these tori. We also assume that the linear foliations that constitute the starting point of the construction are given by two irrational numbers $\alpha$ and $\beta$. Then the pseudogroup reduced to any fiber in $\partial M_1$ is related to $\Gamma$ by a pseudogroup equivalence $\Phi_1$ (in the sense of \cite{Haefliger2}) defining a diffeomorphism (which is still denoted by $\Phi_1$) from this fiber onto $\gamma_0$. Then $\Phi_1$ conjugates the rotation 
$R_\alpha(z) = e^{2 \pi i \alpha}z$ acting on each point $z$ of the fiber to a diffeomorphism $\Phi_1 \scirc R_\alpha \scirc \Phi_1^{-1}$ acting on $\gamma_0$.
Similarly, the pseudogroup reduced to any fiber in $M_2$ is also related to $\Gamma$ by another pseudogroup equivalence $\Phi_2$ defining a new diffeomorphism (which is still denoted by $\Phi_2$ as before) from the fiber to $\gamma_0$. In this case, $\Phi_2$ is constructed by passing through a copy of $\gamma_2$ in $\partial M_2$ that projects onto $\gamma_0$ by the doubling map $f(z)=z^2$. Now $\Phi_2$ conjugates the rotation  $R_\beta(z) = e^{2 \pi i \beta}z$ acting on each point $z$ of the fiber to another diffeomorphism $\Phi_2 \scirc R_\beta \scirc \Phi_2^{-1}$ acting on $\gamma_0$. If the gluing map $\psi$ retricts to the identity for each parallel in $\partial_{out}M_0$ and $\partial_{in}M_0$, then both parametrizations of $\gamma_0$ coincide and the holonomy pseudogroup is generated by the rotations $R_\alpha$ and $R_\beta$ up to conjugation. Thus, there is a holonomy representation of $\pi_1(M)$ into the group of rotations generated by $R_\alpha$ and $R_\beta$ so that the foliation $\F$ is without holonomy. The leaves of $\F$ have one or two ends depending on whether $\alpha$ and $\beta$ are linearly independent over $\Z$ or not. Moreover, all leaves of $\F$ are geometrically infinite hyperbolic surfaces as they contain infinitely many pairs of pants approaching  all the ends.
\end{proof}

 A similar conclusion holds if we take linear foliations by planes instead of  linear foliations by cylinders.

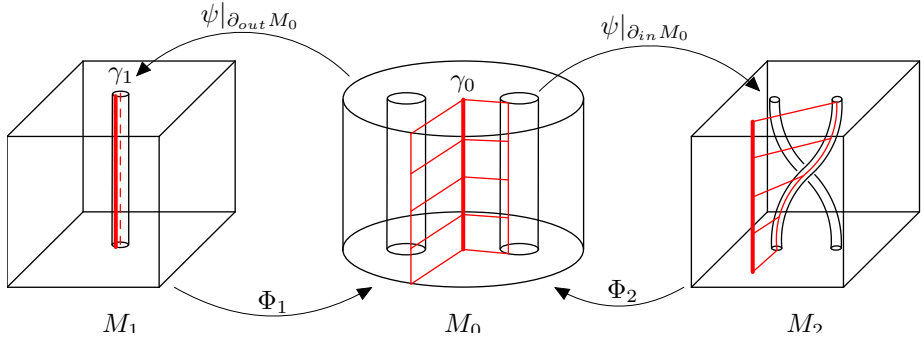
\begin{figure}
\definecolor{ffqqqq}{rgb}{1.,0.,0.}
\begin{tikzpicture}[line cap=round,line join=round,>=triangle 45,x=1.0cm,y=1.0cm, scale=1]
\clip(-6,-1.6) rectangle (6.5,2.8);
\draw [line width=0.5pt] (-3.,0.)-- (-5.,0.);
\draw [line width=0.5pt] (-5.,2.)-- (-5.,0.);
\draw [line width=0.5pt] (-5.,0.)-- (-6.,-1.);
\draw [line width=0.5pt] (-4.,-1.)-- (-6.,-1.);
\draw [line width=0.5pt] (-6.,-1.)-- (-6.,1.);
\draw [line width=0.5pt] (-5.,2.)-- (-3.,2.);
\draw [line width=0.5pt] (-4.,1.)-- (-6.,1.);
\draw [line width=0.5pt] (-6.,1.)-- (-5.,2.);
\draw [line width=0.5pt] (-4.,1.)-- (-3.,2.);
\draw [line width=0.5pt] (-3.,2.)-- (-3.,0.);
\draw [line width=0.5pt] (-3.,0.)-- (-4.,-1.);
\draw [line width=0.5pt] (-4.,-1.)-- (-4.,1.);
\draw [line width=0.5pt] (4.,2.)-- (4.,0.);
\draw [line width=0.5pt] (4.,0.)-- (3.,-1.);
\draw [line width=0.5pt] (5.,-1.)-- (3.,-1.);
\draw [line width=0.5pt] (3.,-1.)-- (3.,1.);
\draw [line width=0.5pt] (4.,2.)-- (6.,2.);
\draw [line width=0.5pt] (5.,1.)-- (3.,1.);
\draw [line width=0.5pt] (3.,1.)-- (4.,2.);
\draw [line width=0.5pt] (5.,1.)-- (6.,2.);
\draw [line width=0.5pt] (6.,2.)-- (6.,0.);
\draw [line width=0.5pt] (6.,0.)-- (5.,-1.);
\draw [line width=0.5pt] (5.,-1.)-- (5.,1.);
\draw [rotate around={-0.042844339263239:(-0.0019251997388241016,-0.49887689799586593)},line width=0.5pt] (-0.0019251997388241016,-0.49887689799586593) ellipse (1.582611848474999cm and 0.49887843806365556cm);
\draw [rotate around={0.:(0.,1.5)},line width=0.5pt] (0.,1.5) ellipse (1.581138830084192cm and 0.5cm);
\draw [line width=0.5pt] (-1.5820226625840632,1.4892868684980738)-- (-1.5820226625840632,-0.5151078629305075);
\draw [line width=0.5pt] (1.582428379358689,1.5124100442968271)-- (1.582428379358689,-0.49198468713175403);
\draw [rotate around={-0.009665691357372922:(-0.7490983572880869,-0.4950221683821027)},line width=0.5pt] (-0.7490983572880869,-0.4950221683821027) ellipse (0.25118235155051544cm and 0.07464120329745294cm);
\draw [line width=0.5pt] (-1.,1.5)-- (-1.0045368409477586,-0.4933758119239331);
\draw [line width=0.5pt] (-0.49606010928037214,1.5)-- (-0.49606010928037214,-0.5);
\draw [line width=0.5pt] (0.48835316642922055,1.4991137393879372)-- (0.4838163254814619,-0.49426207253599563);
\draw [line width=0.5pt] (0.988702337467057,1.4933758119239329)-- (0.9841654965192983,-0.5);
\draw [rotate around={-0.009665691357353031:(-0.746599788283999,1.5049594624231368)},line width=0.5pt] (-0.746599788283999,1.5049594624231368) ellipse (0.251182351550513cm and 0.07464120329745219cm);
\draw [rotate around={-0.009665691357379554:(0.7386015439000909,-0.4944105023872424)},line width=0.5pt] (0.7386015439000909,-0.4944105023872424) ellipse (0.25118235155051916cm and 0.07464120329745404cm);
\draw [rotate around={-0.009665691357353044:(0.7382469077997075,1.5057717286760155)},line width=0.5pt] (0.7382469077997075,1.5057717286760155) ellipse (0.25118235155052276cm and 0.07464120329745523cm);
\draw [line width=0.5pt] (-4.6149185388602225,1.5544282573636903)-- (-4.619455379807982,-0.4389475545602426);
\draw [line width=0.5pt] (-4.399949721514286,1.5618230712685477)-- (-4.399949721514286,-0.4381769287314523);
\draw [rotate around={0.:(-4.510827174886065,-0.43662457643929264)},line width=0.5pt] (-4.510827174886065,-0.43662457643929264) ellipse (0.10637260497526571cm and 0.03626473616660568cm);
\draw [rotate around={0.:(-4.506040690025405,1.5637675415341568)},line width=0.5pt] (-4.506040690025405,1.5637675415341568) ellipse (0.1063726049754517cm and 0.036264736166669094cm);
\draw [shift={(5.266966065865738,1.4157052770808942)},line width=0.5pt]  plot[domain=3.0805886200929447:3.92,variable=\t]({1.*1.2369388370916523*cos(\t r)+0.*1.2369388370916523*sin(\t r)},{0.*1.2369388370916523*cos(\t r)+1.*1.2369388370916523*sin(\t r)});
\draw [shift={(3.679070839246721,-0.38221168350492823)},line width=0.5pt]  plot[domain=-0.08484095039762352:0.74,variable=\t]({1.*1.1618164805087425*cos(\t r)+0.*1.1618164805087425*sin(\t r)},{0.*1.1618164805087425*cos(\t r)+1.*1.1618164805087425*sin(\t r)});
\draw [shift={(5.326808782185721,1.4422959154791912)},line width=0.5pt]  plot[domain=3.105815575594726:3.9,variable=\t]({1.*1.169136709651012*cos(\t r)+0.*1.169136709651012*sin(\t r)},{0.*1.169136709651012*cos(\t r)+1.*1.169136709651012*sin(\t r)});
\draw [shift={(3.636032100204356,-0.4074590108421598)},line width=0.5pt]  plot[domain=-0.054697044718346355:0.74,variable=\t]({1.*1.3370392652215342*cos(\t r)+0.*1.3370392652215342*sin(\t r)},{0.*1.3370392652215342*cos(\t r)+1.*1.3370392652215342*sin(\t r)});
\draw [rotate around={-2.9579506036451946:(4.094718111937447,1.4888116346403715)},line width=0.5pt] (4.094718111937447,1.4888116346403715) ellipse (0.06408449371547918cm and 0.037860143942234095cm);
\draw [rotate around={1.708087982870742:(4.90511095401719,-0.47936261918567674)},line width=0.5pt] (4.90511095401719,-0.47936261918567674) ellipse (0.06783740520897243cm and 0.02763841492970122cm);
\draw [shift={(3.7440959891077994,1.4096076375420026)},line width=0.5pt]  plot[domain=-0.8350950233349614:0.06901207967980803,variable=\t]({1.*1.2369388370916508*cos(\t r)+0.*1.2369388370916508*sin(\t r)},{0.*1.2369388370916508*cos(\t r)+1.*1.2369388370916508*sin(\t r)});
\draw [shift={(5.346337949256641,-0.3755358718207614)},line width=0.5pt]  plot[domain=2.297731783855797:3.2344416501703748,variable=\t]({1.*1.1618164805087419*cos(\t r)+0.*1.1618164805087419*sin(\t r)},{0.*1.1618164805087419*cos(\t r)+1.*1.1618164805087419*sin(\t r)});
\draw [shift={(3.6840422548140546,1.435718205217922)},line width=0.5pt]  plot[domain=-0.8327054016449811:0.043785124178027705,variable=\t]({1.*1.169136709651006*cos(\t r)+0.*1.169136709651006*sin(\t r)},{0.*1.169136709651006*cos(\t r)+1.*1.169136709651006*sin(\t r)});
\draw [shift={(5.389577487897337,-0.40043773709568486)},line width=0.5pt]  plot[domain=2.3284619159106854:3.2042977444910963,variable=\t]({1.*1.3370392652215386*cos(\t r)+0.*1.3370392652215386*sin(\t r)},{0.*1.3370392652215386*cos(\t r)+1.*1.3370392652215386*sin(\t r)});
\draw [rotate around={-176.58322214792545:(4.915720922977141,1.4920989664216837)},line width=0.5pt] (4.915720922977141,1.4920989664216837) ellipse (0.06408449371547918cm and 0.037860143942234095cm);
\draw [rotate around={178.75073926555865:(4.121115127383414,-0.4825017733550097)},line width=0.5pt] (4.121115127383414,-0.4825017733550097) ellipse (0.06783740520897243cm and 0.02763841492970122cm);
\draw [line width=0.5pt] (4.,0.)-- (6.,0.);
\draw [line width=1.5pt,color=ffqqqq] (-6.186585563995027E-4,-0.49856798064849217)-- (1.2818962243619218E-4,1.4887250269450296);
\draw [line width=1.5pt,color=ffqqqq] (-4.5725788967603895,1.5354735035282983)-- (-4.570034880658589,-0.4667524534181446);
\draw [line width=0.5pt,dash pattern=on 3pt off 3pt,color=ffqqqq] (-4.508575508150605,1.5728808678671051)-- (-4.509936983651658,-0.4430990995696002);
\draw [line width=0.5pt,,color=ffqqqq] (-0.6884973024400652,-0.9585625659709638)-- (-0.6884973024400652,1.0351068880690035);
\draw [line width=0.5pt,color=ffqqqq] (-0.6884973024400652,1.0351068880690035)-- (1.2818962243619218E-4,1.4887250269450296);
\draw [line width=0.5pt,color=ffqqqq] (-0.6884973024400652,-0.9585625659709638)-- (-6.186585563995027E-4,-0.49856798064849217);
\draw [line width=0.5pt,color=ffqqqq] (-0.6880876955811462,0.5045920934351044)-- (5.377964813551017E-4,0.9582102323111304);
\draw [line width=0.5pt,color=ffqqqq] (-0.685319377043518,0.00957699299930953)-- (0.00330611501898323,0.46319513187533584);
\draw [line width=0.5pt,color=ffqqqq] (-0.6928397768953904,-0.4889112700163013)-- (-0.004214284832889353,-0.03529313114027503);
\draw [shift={(5.288688292550559,-0.36240933174139967)},line width=0.5pt,color=ffqqqq]  plot[domain=2.3028357334584393:3.268087831296908,variable=\t]({1.*1.1710659891225945*cos(\t r)+0.*1.1710659891225945*sin(\t r)},{0.*1.1710659891225945*cos(\t r)+1.*1.1710659891225945*sin(\t r)});
\draw [shift={(3.7430403922623356,1.395296202989313)},line width=0.5pt,color=ffqqqq]  plot[domain=-0.8602653328762075:0.050305659439837176,variable=\t]({1.*1.1697009446703166*cos(\t r)+0.*1.1697009446703166*sin(\t r)},{0.*1.1697009446703166*cos(\t r)+1.*1.1697009446703166*sin(\t r)});
\draw [line width=1.5pt,color=ffqqqq] (3.8063889550975976,-0.798024071769694)-- (3.8063889550975976,1.1968074066478707);
\draw [line width=0.5pt,color=ffqqqq] (3.8063889550975976,-0.798024071769694)-- (4.126978948112674,-0.5101487981184611);
\draw [line width=0.5pt,color=ffqqqq] (3.8063889550975976,1.1968074066478707)-- (4.911261591704226,1.4541139650785342);
\draw [line width=0.5pt,color=ffqqqq] (3.8063889550975976,-0.2937283770175677)-- (4.156358352077328,-0.06370470039536674);
\draw [line width=0.5pt,color=ffqqqq] (3.8063889550975976,0.19570816361771315)-- (4.47372870738174,0.47856416690237213);
\draw [line width=0.5pt,color=ffqqqq] (3.8063889550975976,0.7148075248975565)-- (4.833697864923527,0.9726065091346674);
\draw [line width=0.5pt,color=ffqqqq] (0.5930432685649358,1.4448883553619918)-- (0.5928615780505196,-0.5551805706217298);
\draw [line width=0.5pt,color=ffqqqq] (1.2818962243619218E-4,1.4887250269450296)-- (0.5930432685649358,1.4448883553619918);
\draw [line width=0.5pt,color=ffqqqq] (5.377964813551017E-4,0.9582102323111304)-- (0.5929968906434262,0.9343551557552815);
\draw [line width=0.5pt,color=ffqqqq] (0.00330611501898323,0.46319513187533584)-- (0.592950523435148,0.423939888563164);
\draw [line width=0.5pt,color=ffqqqq] (-6.186585563995027E-4,-0.49856798064849217)-- (0.5928615780505196,-0.5551805706217298);
\draw [line width=0.5pt,color=ffqqqq] (-0.004214284832889353,-0.03529313114027503)-- (0.5929048382512246,-0.07896758041793639);
\node (1) at (-4.5,1.8) {$\gamma_1$};
\node (2) at (0,1.7) {$\gamma_0$};
\node(A) at (-4.5,-1.5) {$M_1$};
\node(B) at (0,-1.5) {$M_0$};
\node(C) at (4.5,-1.5) {$M_2$};
\node(3) at (-2.8,2.6) {$\psi|_{\partial_{out}M_0}$};
\draw [->, line width=0.2pt] (-1.5,1.8) to [out=135,in=45] (-4.3,1.8);
\node(4) at (2.4,2.45) {$\psi|_{\partial_{in}M_0}$};
\draw [->,  line width=0.2pt] (1,1.6) to [out=45,in=135] (3.9,1.6);
\draw [->, line width=0.2pt] (-3.8,-1) to [out=-30,in=-150]   (-1.2,-1);
\node(5) at (-2.5,-1.2) {$\Phi_1$};
\draw [->,line width=0.2pt] (2.9,-1) to [out=-150,in=-30]  (1.2,-1)  ;
\node(5) at (2.1,-1.05) {$\Phi_2$};

\end{tikzpicture}
\caption{Gluing pieces for Hirsch surgery}
\label{fig:Hirsch}
\end{figure}

\section{Foliations without holonomy on hyperbolic 3-manifolds} \label{Swithoutholonomy}

 From now on, we deal with the construction of minimal foliations on hyperbolic 3-manifolds. Under a homological condition that will be made precise next, it is actually possible to construct foliations without holonomy proving Theorem~\ref{thm:withoutholonomy}.

\subsection{A Tischler type construction}
Let $M$ be a fiber bundle over $\Sp^1$ with pseudo-Anosov monodromy, and assume that its first Betti number $b_1 \geq 2$.  
From \cite{Katok&al}, there is a $C^\infty$ diffeomorphism $\varphi : \Sigma \to \Sigma$, called {\em of pseudo-Anosov type}, such that  $M$ is homeomorphic to $M_\varphi$ by a homeomorphism preserving the fibrations of $M$ and $M_\varphi$ over $\Sp^1$. Denote by $f : M_\varphi \to \Sp^1$ the bundle map.
Then the canonical 1-form $d\theta$ on $\Sp^1$ lifts to a closed $1$-form  
$\omega_0=f^*(d\theta)$ on $M_\varphi$. Let $\omega$ be a nonvanishing closed $1$-form on $M_\varphi$ 
which is $\C^\infty$ close to $\omega_0$ and such that its group of periods $Per( \omega) \cong
\Z^r$ for some $2 \leq r \leq b_1$.
Recall that $Per( \omega)$ is the additive subgroup of $\R$ generated by 
the set of integrals $\int_{\gamma_i}\, \omega$ where 
$\{\gamma_1, \dots, \gamma_{b_1}\}$
are loops representing a basis of $H_1(M_\varphi, \R)$. The nonvanishing $1$-form $\omega$ defines a foliation without 
holonomy $\F$ whose leaves are diffeomorphic to the regular covering of $\Sigma$ with deck 
transformation group $\Z^{r-1}$ (see \cite[\S VIII.1.1]{Hector-Hirsch}). Since $\F$ has no holonomy
and its leaves are non-compact, Sacksteder's theorem  implies that it is minimal (see 
 \cite[Theorem VI.3.2]{Hector-Hirsch}).

\subsection{Action on homology of a pseudo-Anosov map}
Now, we shall see how
all manifolds in the hypothesis of Theorem~\ref{thm:withoutholonomy}  can be obtained.
Given the canonical basis $\{[\alpha_1], [\beta_1], \dots, [\alpha_g], [\beta_g]\}$,
the algebraic intersection number defines a symplectic structure on the homology group $H_1(\Sigma,\R) \cong \R^{2g}$.
Each orientation-preserving homeomorphism $\varphi : \Sigma \to \Sigma$ induces an automorphism $\varphi_\ast : H_1(\Sigma,\R) \to H_1(\Sigma,\R)$ respecting this structure 
and hence the mapping class group $\rm{Mod}(\Sigma)$ admits a \emph{homology representation} 
$\Psi : \rm{Mod}(\Sigma) \to Sp(2g,\Z)$ on the symplectic group 
$ Sp(2g,\Z)$.
 The \emph{Torelli group} $\mathcal{T}(\Sigma)$ is the subgroup of $\rm{Mod}(\Sigma)$ consisting of those elements which act trivially on $H_1(\Sigma,\R)$, 
 that is, $\mathcal{T}(\Sigma) = Ker \,\Psi$. 

 \begin{definition} \label{def:Torelli} For each 
element 
$\varphi \in \rm{Mod}(\Sigma)$, we denote by $k(\varphi)$ the dimension of the 
 invariant subspace of $H_1(\Sigma, \R)$ which is fixed by $\varphi_\ast$. We say $\varphi$ is \emph{partially Torelli of order $k(\varphi)$}. In particular, if $\varphi \in \mathcal{T}(\Sigma)$, then $k(\varphi) = 2g$. 
\end{definition} 

 By Wang's homology sequence, the first homology group $H_1(M_\varphi, \R) \cong \R^{k(\varphi)+1}$ and the first Betti number $b_1 = k(\varphi)+1$. Examples of pseudo-Anosov maps in the Torelli group were constructed by Thurston in \cite{Thurston1}. In fact,
for any $0\leq k\leq 2g$, there exist pseudo-Anosov 
maps which are partially Torelli of order $k$. Explicit examples have been constructed 
in \cite{Agol} when $k>0$ and 
in \cite{Penner} when $k=0$. 
This proves that any hyperbolic surface bundle over $\Sp^1$ having first Betti number $b_1 \geq 2$ actually has a foliation without holonomy by geometrically infinite surfaces of polynomial growth.

\setcounter{equation}{0} 

\section{Foliations via branched coverings} \label{Sbranching}

In this section, we are interested in a very general method for adding topological complexity to the ambient manifold and the leaves of a foliation, which is valid in any dimension. Let $\pi : M \to M_0$ be a branched covering between closed 3-manifolds with
branch locus $B_0  \subset M_0$ and ramification locus $B \subset M$ of dimension one. In fact, we assume that the branch locus $B_0$ has no $0$-dimensional stratum, that is, $B_0$ is a union of simple closed curves. 
We also assume that $\F_0$ is a foliation by surfaces of $M_0$ transverse to $B_0$. Then $\F_0$  lifts to a foliation by surfaces $\F$ of $M$ that coincides with the usual lifted foliation $\pi^\ast \F$ on the complement of $B= \pi^{-1}(B_0)$. Indeed, if $\F_0$ is given by a $1$-form $\omega_0$ on $M_0$ satisfying the (closed) integrability condition $\omega_0 \wedge d\omega_0 = 0$, we can modify the differentiable structure of $M$ along a tubular neighborhood of $B$ to match fibers (which are branched covers of the fibers of a tubular neighborhood of $B_0$) with disks tangent to $\F$. Then $\omega_0$ can be lifted to a $1$-form $\omega$ which has no singularities and defines $\F$ on $M-B$. By continuity, the integrability condition $\omega \wedge d\omega = 0$ holds on the whole manifold $M$. Next we apply this method to construct minimal foliations by geometrically infinite surfaces on a large family of hyperbolic 3-manifolds that fiber over $\Sp^1$.

\subsection{Minimal foliations on hyperbolic 3-manifolds}
 Let  $\varphi : \Sigma \to \Sigma$ 
be an orientation-preserving pseudo-Anosov homeomorphism,
and assume the measured foliations $\F_\varphi^+$ are orientable.

\begin{definition} 
We say $\varphi$ is {\em of Franks-Rykken type} if there are a hyperbolic toral automorphism $A : \T^2 \to \T^2$ and a branched 
covering $\pi_0 : \Sigma \to \T^2$ such that $\pi_0$ is a  semiconjugacy from $\varphi$ to $A$ that makes the following diagram commute:

\begin{equation} \label{eq:Franks1}
\begin{tikzcd}        
    \Sigma \arrow{r}{\varphi} \arrow[swap]{d}{\pi_0} & \Sigma \arrow{d}{\pi_0} \\
    \, \T^2 \arrow{r}{A} & \,\T^2
\end{tikzcd}
\end{equation} 
\end{definition}

The branched covering $\pi_0 : \Sigma \to \T^2$ extends to a branched covering $\pi : M_\varphi \to \T^3_A$ such that $f_0 \scirc \pi = f$ where $f : M_\varphi \to \Sp^1$ and $f_0 : \T^3_A \to \Sp^1$ are the fibrations of the mapping tori of $\varphi$ and  $A$ respectively.
In \cite{Franks}, 
Franks and 
Rykken give a necessary and sufficient condition to have monodromy of this type: the stretch factor of $\varphi$  must be quadratic over $\mathbb{Q}$. Consequently, we can reformulate Theorem~\ref{thm:Franks} as follows:

\begin{theorem} \label{thm:Franks2}
 Let $M = M_\varphi$ be a closed orientable hyperbolic 3-manifold that fibers over the circle $\Sp^1$. Asume that the monodromy $\varphi$ is of Franks-Rykken type.
Then $M$ admits a minimal transversely affine foliation $\F$ whose leaves are  geometrically 
 infinite hyperbolic surfaces.
\end{theorem}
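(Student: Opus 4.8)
The plan is to build the foliation $\F$ on $M=M_\varphi$ as the lift, via the branched covering $\pi:M_\varphi\to\T^3_A$ constructed from $\pi_0:\Sigma\to\T^2$, of the center-unstable foliation $\F_0$ of the Anosov flow on $\T^3_A$ described in the introduction. First I would recall that $\F_0$ is the foliation whose leaves are products $\F_A^+\times\R$, where $\F_A^+$ is the (linear, hence smooth) unstable foliation of $A$ on $\T^2$ associated to the eigenvalue $\lambda>1$; this is a foliation by planes and cylinders, it is minimal because the slope of $\F_A^+$ is irrational, and it is transversely affine — indeed the transverse holonomy is generated by the contraction by $1/\lambda$ coming from the $\partial/\partial t$ direction, so the transverse structure is modeled on $(\R,\mathrm{Aff}(\R))$. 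Since $\pi$ is a submersion away from the ramification locus $B=\pi^{-1}(B_0)$ and $B_0$ (the mapping torus of the branch points of $\pi_0$) is a finite union of closed orbits transverse to $\F_0$, the general lifting principle from the start of Section~\ref{Sbranching} produces a foliation $\F=\pi^\ast\F_0$ on $M_\varphi$ that agrees with the naive pullback off $B$ and is transverse to $B$; because the branched covering is a local diffeomorphism off $B$ and the transverse affine structure is carried along unchanged, $\F$ is again transversely affine.

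Next I would verify minimality of $\F$. The fibration $q_0:\T^3_A\to\Sp^1$ restricts on each leaf of $\F_0$ to the projection to the $\R$-factor followed by $\R\to\Sp^1$; since $q_0\scirc\pi=q$, the holonomy pseudogroup of $\F$ along a closed transversal lying over a closed transversal of $\F_0$ is a finite extension (or sub-pseudogroup, up to equivalence) of the holonomy pseudogroup of $\F_0$, which acts minimally on the circle. Concretely, a closed transversal $\gamma_0$ of $\F_0$ meeting all leaves can be chosen disjoint from $B_0$, its preimage $\pi^{-1}(\gamma_0)$ is a union of closed transversals of $\F$ meeting all leaves of $\F$, and the return maps are conjugate to the return maps of $\gamma_0$; minimality of the $\mathrm{Aff}(\R)$-action downstairs then forces every leaf of $\F$ to be dense. (Alternatively one invokes that $\F_0$ minimal plus $\pi$ surjective with connected fibers over a minimal set gives $\F$ minimal.)

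Then I would identify the leaves. A leaf $L$ of $\F$ maps under $\pi$ to a leaf $L_0$ of $\F_0$, and $L\to L_0$ is the branched covering obtained by pulling back $\pi$; since $L_0\cong\R^2$ (or $\Sp^1\times\R$, the cylindrical leaves forming a measure-zero family) and the branch points of $L\to L_0$ are the intersections of $L$ with $B$, which accumulate on the single topological end of a planar $L_0$, the leaf $L$ is a surface with infinitely many branch points accumulating on its ends; by the Riemann–Hurwitz count its genus (or its number of ends) grows without bound along any exhaustion, so $L$ has infinite genus or infinitely many ends, i.e.\ $L$ is geometrically infinite. Finally, by Theorem~\ref{thm:Franks2}'s hypothesis $M=M_\varphi$ is hyperbolic, so it admits no Reeb component (being atoroidal and irreducible, Theorem~\ref{thm:Novikov} applies), and the uniformization theorem for foliations of \cite{Candel},\cite{Verjovsky} — leaves cannot be parabolic since they are neither planes nor cylinders nor tori — shows $\F$ is a foliation by hyperbolic surfaces, completing the proof.

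The main obstacle I expect is the careful treatment of the branched covering near the ramification locus: one must check that the lifted plaques actually fit together into a genuine $C^\infty$ foliation transverse to $B$ rather than developing singularities along $B$, and that the transverse affine structure extends across (the transversals meeting $B$), which is where the hypothesis that $\F_\varphi^\pm$ are \emph{orientable} (so that $\pi_0$ can be taken with even, or controllable, local degrees and $B_0$ stays transverse to $\F_0$) is essential. Quantifying the geometric infiniteness of the leaves via Riemann–Hurwitz, and ruling out the parabolic (cylindrical) case for all but a null set of leaves so that Candel's theorem yields a genuinely hyperbolic metric, is the remaining technical point.
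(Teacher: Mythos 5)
Your overall strategy (lift the center-unstable foliation $\F_0$ through the branched covering $\pi:M_\varphi\to\T^3_A$, deduce minimality and the transverse affine structure from local conjugacy of holonomy, and apply Riemann--Hurwitz to an exhaustion to show leaves have infinite genus) is the same as the paper's. But there is a concrete gap at the very first step, where you assert that $B_0$ is \emph{transverse} to $\F_0$.

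The branch locus of $\pi$ is the union of mapping tori of the branch points $p_1,\dots,p_n$ of $\pi_0:\Sigma\to\T^2$, i.e.\ the \emph{periodic orbits} $\O_1,\dots,\O_n$ of the Anosov flow on $\T^3_A$. Since $\F_0$ is the center-unstable foliation, its tangent bundle \emph{contains} the flow direction, so each $\O_i$ is \emph{tangent} to $\F_0$ (it lies inside a leaf), not transverse. Taking $\pi^\ast\F_0$ with this branch locus would not yield a foliation transverse to $B$ — near $B$ the lifted plaques would wind around the ramification circle rather than stay transverse to it — so the whole construction breaks at the outset. The paper's crucial move, which you skip, is to \emph{isotope} each $\O_i$ to a closed curve $\gamma_i$ transverse to both the fibration $q_0$ and the foliation $\F_0$, and then build the branched covering with branch locus $\gamma=\bigsqcup\gamma_i$ instead; only after this does the lifted foliation become a genuine foliation transverse to $B$, and only then can one compare holonomy pseudogroups at a complete transversal. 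You attribute the transversality you want to the orientability of $\F_\varphi^\pm$, but that hypothesis controls the structure of $\pi_0$ and the stretch factor, not the position of the branch locus with respect to $\F_0$. Once this isotopy step is inserted, the rest of your argument (conjugate holonomy pseudogroups giving minimality and the affine transverse structure, Riemann--Hurwitz giving infinite genus with no planar ends, and Candel/Verjovsky giving the hyperbolic leafwise metric) lines up with the paper's proof.
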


\begin{proof} Firstly, up to conjugation by a homeomorphism isotopic to the identity, we can  assume that $\varphi$ is a $C^\infty$ diffeomorphism of both pseudo-Anosov and  Franks-Rykken type. Then the  branched covering $\pi_0 : \Sigma \to \T^2$ in Diagram \eqref{eq:Franks1} extends to a branched covering 
$\pi : M \to \T^3_A$ from the hyperbolic 3-manifold $M$ to the solvable 3-manifold $\T^3_A$. 
We shall apply the announced method to the center-unstable foliation $\F_0$ of the Anosov flow (see Section \ref{Sintro}). 
By construction, the branch locus $B_0$ of $\pi$ is a link in $\T^3_A$ made up of periodic orbits $\O_1, \dots, \O_n$ of the Anosov flow and its ramification locus $B$ is a link in $M$ made up of periodic orbits of the pseudo-Anosov flow whose component project diffeomorphically onto the orbits $\O_j$. 
But replacing the orbits $\O_1, \dots, \O_n$ by isotopic closed transversals $\gamma_1, \dots, \gamma_n$ to both the fibration $f_0$ and the foliation $\mathcal F_0$, 
the branch locus 
 $B_0$ becomes transverse to both $f_0$ and $\F_0$. We lift $\F_0$ to a foliation by surfaces $\F$ on $M$ such that $B$ is transverse to both the fibration $f$ and the foliation $\F$. By minimality, each component $\gamma_j$ of $B_0$ is a complete 
transversal to $\F_0$ (i.e. it meets all leaves) and therefore each component of $B$ is also a complete transversal to $\F$. The local description of the branched covering around this complete transversal tells us that the holonomy pseudogroups of the foliations $\F_0$ and $\F$ are conjugate. Therefore, $\F_0$ and $\F$ share the same transverse structure
and hence $\F$ is minimal and transversely affine.

On the other hand, each leaf $L \in \F$ is a branched cover of some leaf $L_0 \in \F_0$. Let $\pi_L : L \to L_0$ be the branched covering we are considering, and let $d$ be the covering degree of $\pi_L$. It is smaller than or equal to the degree of $\pi$ and may depend on $L$.
By minimality, $L_0$ contains countably many branch points in each component of $B_0$, and each end of $L_0$ is approached by these branch points. The leaf $L$ also contains countably many ramification points with the same index in each component of $B$, and each end of $L$ is also approached by these ramification points. 
We cover $L_0$ by an increasing sequence of closed disk or cylinders  $D_k$ (depending on whether $L_0$ is a plane or a cylinder) that contain a 
finite number of branch points $p_1, \dots, p_k$. Then $L$ can be covered by an increasing sequence of branched covers $D'_k = \pi_L^{-1}(D_k)$ with degre $d$ and ramifications points $q_1, \dots , q_l$ such that $q_j$ has ramification index $2 \leq e_j \leq d$. 
Now, the Riemann-Hurwitz formula implies that
$\chi(D'_k) = d \chi(D_k) - \sum_{j=1}^l (e_j - 1) \leq d - l$
is not bounded from below.
Consequently, all leaves of $\F$ have infinite genus. Indeed, if $L_0$ is a
planar leaf (resp. cylindrical leaf), then $L$ admits at most $d$ ends (resp. $2d$ ends). Let
us show this for a planar leaf $L_0$. If the disk $D_k \subset L_0$ is sufficiently large, then
$D'_k = \pi_L^{-1}(D_k)$ is connected. Given a non-branched  base point $ x_0 \in D_k$, consider
$\pi_L^{-1}(x_0) = \{  z_0, \dots , z_{d-1} \}$ and take a path $\sigma_j$ in $L$ which joins $z_0$ to $z_j$ and evades the ramification points for $j=1, \dots , d-1$. If $D_k$ is large enough so as to include all the
images $\pi_L(\sigma_j)$, then the component of $D'_k$ which contains $z_0$ contains all $z_j $, by
the path lifting property. Therefore $D'_k$ is connected. Then
the number of connected components of $ L - \inte{D}'_k$ is at most $d$. Since $\{D'_k\}$ is 
an exhausting sequence of $L$, we conclude that $L$ has at most $d$ ends. Since $\chi(D'_k)$ converges to $-\infty$ as $k$ goes to $+\infty$, the leaf $L$ has infinite genus. In fact, each end is nonplanar.
\end{proof}

\subsection{Action on homology of pseudo-Anosov maps of Franks-Rykken type} As in Section~\ref{Swithoutholonomy}, it is natural to ask about the action on homology of a pseudo-Anosov map $\varphi : \Sigma \to \Sigma$ making commutative Diagram~\eqref{eq:Franks1}. This induces another diagram
\begin{equation} \label{eq:Franks3}
\begin{tikzcd}        
    H_1(\Sigma, \R) \arrow{r}{\varphi_\ast} \arrow[swap]{d}{(\pi_0)_\ast} &  H_1(\Sigma, \R)  \arrow{d}{(\pi_0)_\ast} \\
    \,  H_1(\T^2,\R) \arrow{r}{A_\ast} & \, H_1(\T^2,\R)
\end{tikzcd}
\end{equation} 
in homology. Since $A_\ast$ fixes no nontrivial subspace of $H_1(\T^2,\R)$, we have that the subspace of $H_1(\Sigma, \R)$ fixed by $\varphi_\ast$ is contained in the kernel of $(\pi_0)_\ast$.  Therefore,  its dimension $k(\varphi) \leq 2g-2$ and hence $M_\varphi$ has first Betti number 
$b_1 \leq 2g-1$. 
In Section~\ref{SAnosov}, we will construct two examples 
from pseudo-Anosov maps with stretch factor quadratic over $\mathbb{Q}$ and Torelli order $k=0$, but we do not know examples with $1 \leq  k \leq 2g-2$. 

\section{Two examples} \label{SAnosov}

Here, we illustrate Theorem~\ref{thm:Franks} by describing two specific examples: 

\subsection{An even number of branch points}
Let  $\T^3_A$ be the solvable 3-manifold obtained from the suspension of Arnold's cat map
\begin{equation}\label{Arnold}
A = \matriz{2}{1}{1}{1}.
\end{equation}
We still denote by $\F_0$ the center-unstable foliation of the Anosov flow, and
we exemplify Theorem~\ref{thm:Franks} by constructing directly a branched cover of $\T^3_A$ endowed with a foliation by hyperbolic surfaces obtained from $\F_0$.
For any even number $n = 2g -2$ with $g \geq 2$, we choose $n$ periodic points  $p_1, \dots , p_n$ of 
$A$ and the corresponding periodic orbits $\O_1, \dots, \O_n$ of the Anosov flow. Up to replacing $A$ by one of its powers $A^m$, we can actually assume that 
$p_1, \dots , p_n$ are fixed by $A^m$. In fact, we assume it has one more fixed point $x_0$.
As explained by Laudenbach in \cite[Expos\'e 13]{Fathi&al}, there is a closed surface $\Sigma$ of genus $g$ which is the total space of a 2-fold branched covering of the torus $\T^2$ 
with branch points $p_1, \dots p_n$. 
 Indeed, if  $m$ and $p$ are the generators of $\pi_1(\T^2)$ and $\alpha_1, \dots,  \alpha_n$ are the classes represented by simple loops which surround $p_1, \dots , p_n$ counterclockwise, the fundamental group $\pi_1(\T^2 - \{p_1, \dots , p_n\}, x_0)$ is a free group generated by $m$, $p$ and $\alpha_2, \dots, \alpha_n$, or equivalently given by 
$$
\pi_1(\T^2 - \{p_1, \dots , p_n\}, x_0) = | \; m,p, \alpha_1, \dots, \alpha_n : [m,p] \prod_{j=1}^n \alpha_j = 1 \; |.
$$
As $n$ is even, there is a representation 
\begin{equation} \label{branchrep}
\rho : \pi_1(\T^2 - \{p_1, \dots , p_n\} x_0) \to \Z/2\Z
\end{equation}
such that $\rho(m) = \rho (p) = \bar 0$ and $\rho(\alpha_j) = \bar 1$ for every $j= 1, \dots, n$. Let $\pi_0 : \Sigma \to \T^2$ be the 
branched covering associated to \eqref{branchrep}, which can be also constructed as follows: 
\begin{list}{\labelitemi}{\leftmargin=17pt}

\item[(1)] we choose a small closed disk $D_j$ centered at $p_j$, 

\item[(2)] we take the 2-fold branched covering $\pi_0 : D'_j \to D_j$ with branch point $p_j$ which is associated to the representation $\rho_j$ 
sending the generator $\alpha_j$ of $\pi_1(D_j-\{p_j\})$ to the generator of $\Z/2\Z$ (see Figure~\ref{fig:branchedD}),

\item[(3)] we remove the interiors of the disks $D_j$ and we glue together the 2-fold regular covering of $\T^2 - \bigsqcup_{j=1}^n \inte{D}_j$  defined by the representation~\eqref{branchrep} 
and the 2-fold covers $D'_j$ using a diffeomorphism from each $\partial D'_j$ to the 2-fold cover of  each $\partial D_j$.
\end{list}

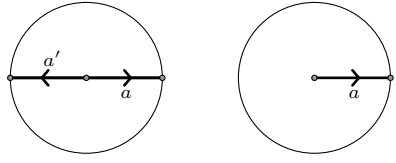
\begin{figure}[t]
\definecolor{qqttcc}{rgb}{0.6,0.6,0.6}
\begin{tikzpicture}[line cap=round,line join=round,>=triangle 45,x=1.0cm,y=1.0cm, scale=0.5]
\clip(-3,-2) rectangle (9,3);
\draw(0,0) circle (2cm);
\draw [line width=1pt] (0,0)-- (2,0);
\draw [line width=1pt] (1.17,0) -- (1,-0.2);
\draw [line width=1pt] (1.17,0) -- (1,0.2);
\draw [line width=1pt] (0,0)-- (-2,0);
\draw [line width=1pt] (-1.17,0) -- (-1,0.2);
\draw [line width=1pt] (-1.17,0) -- (-1,-0.2);
\draw(6,0) circle (2cm);
\draw [line width=1pt] (6,0)-- (8,0);
\draw [line width=1pt] (7.17,0) -- (7,-0.2);
\draw [line width=1pt] (7.17,0) -- (7,0.2);
\draw [line width=1pt] (0,0)-- (2,0);
\draw [line width=1pt] (0,0)-- (-2,0);
\begin{scriptsize}
\draw [fill=qqttcc] (0,0) circle (2pt);
\draw [fill=qqttcc]  (2,0) circle (2pt);
\draw[color=black] (1.05,-0.42) node {$a$};
\draw [fill=qqttcc] (-2,0) circle (2pt);
\draw[color=black] (-0.89,0.52) node {$a'$};
\draw [fill=qqttcc]  (6,0) circle (2pt);
\draw [fill=qqttcc] (8,0) circle (2pt);
\draw[color=black] (7.05,-0.42) node {$a$};
\end{scriptsize}
\end{tikzpicture}
\caption{2-fold branched covering of $D$}
\label{fig:branchedD}
\end{figure}

\noindent
The Riemann-Hurwitz formula tells us that
$\chi(\Sigma) = 2 \chi(\T^2) - \sum_{j=1}^n (e_j - 1) = 2 - 2g$,  
where $e_j = 2$ is the index of the unique ramification point over $p_j$. By 
the path lifting property, since the isomorphism induced by $A^m$ preserves the kernel of $\rho$, $A^m$ lifts to pseudo-Anosov homeomorphism $\varphi : \Sigma \to \Sigma$ with the same stretch factor. 
\medskip 
 Now, as in the proof of Theorem~\ref{thm:Franks}, we consider an isotopy between $\O_j$ and a closed transversal $\gamma_j$ to both $f_0$ and $\F_0$ and we construct a 2-fold branched cover $M$ of $\T^3_A$ with branch locus $\gamma = \bigsqcup_{j=1}^n \gamma_j$ as follows:

\begin{list}{\labelitemi}{\leftmargin=17pt}

\item[(1)] we choose a tubular neighborhood $\psi_j : D_j \times \Sp^1 \to V_j$ of 
$\gamma_j = \psi(\{0\} \times \Sp^1)$ such that 
 $D_{p_j} = \psi_j(D_j \times \{z\})$ is a disk in 
the leaf of $\F_0$ passing through $p_j = \psi_j(0,z)$,

\item[(2)] we take a 2-fold branched covering $\pi_0 \times id : D'_j \times \Sp^1 \to D_j \times \Sp^1$ 
with branch locus $\gamma_j$ which is defined by the representation sending the generator $\alpha_j$ of the 
first direct summand of $\pi_1 \big( (D_j-\{p_j\}) \times \Sp^1 \big)\cong \pi_1(D_j-\{p_j\}) \oplus \Z $ 
to the generator of $\Z/2\Z$ and the generator of the second direct summand to $0$,

\item[(3)] we remove the interiors of the tubular neighborhoods $V_j$ and we glue together the 
2-fold regular cover of $\T^3_A - \bigsqcup_{j=1}^n \inte{V}_j$  
defined by the trivial extension of the representation~\eqref{branchrep} 
and the 2-fold covers $D'_j \times \Sp^1$ using a diffeomorphism from each $\partial D'_j \times \Sp^1$ 
to the 2-fold cover of each  $\partial V_j \cong \partial D_j \times \Sp^1$.
\end{list}

\noindent
By construction, $M$ is homeomorphic to $M_\varphi$ and $\F_0$ lifts to foliation $\F$ of $M$ whose leaves are 2-fold branched coverings of the leaves of $\F_0$ with countably many branch points. Reasoning as in the proof of Theorem~\ref{thm:Franks2},  no leaf has a planar end. 

\subsection{
Four branch points}

Let $d$ and $a_1,a_2,a_3,a_4$ be integers such that 
\begin{equation} \label{pillownumbers}
0 < a_j \leq d~, \qquad gcd(d,a_1,a_2,a_3,a_4) = 1~, \qquad \sum_{j=1}^4 a_j = 0\  (\mbox{mod } d).
\end{equation}
Given four distinct points $z_1, z_2, z_3, z_4 \in \C$, we consider the Riemann surface 
$$
\Sigma_d(a_1,a_2,a_3,a_4) = \{ \, (z,w) \in \C^2 \, | \, w^d = (z-z_1)^{a_1}(z-z_2)^{a_2}(z-z_3)^{a_3}(z-z_4)^{a_4} \, \}.
$$
This is a branched cover of the Riemann sphere $\C P^1$ 
where the covering map is the projection onto the $z$-factor and the branch points are $z_1, z_2, z_3, z_4$. The group of deck transformations is the cyclic 
group  $\Z/d\Z$ formed by the transformations $\tau(z,w) = (z,\zeta w)$ where 
$\zeta$ is a primitive $d$-th root of unity.
As explained in \cite{Forni&al}, up to a  factor, the  
quadratic differential 
$q = (dz)^2 / (z-z_1)(z-z_2)(z-z_3)(z-z_4)$
defines a flat structure on $\C P^1$ with singularities at $z_1, z_2, z_3, z_4$, which can be obtained by identifying the boundaries of two squares when points $z_j$ are well chosen, for example $z_j =i^j$, see Figure~\ref{fig:pillow}. 
Thus, the total space of the branched covering $\pi : \Sigma_d(a_1,a_2,a_3,a_4) \to \C P^1$ is a {\em square-tiled surface}, which contains $gcd(d,a_j)$ ramification points over each point $z_j$ with index $d/gcd(d,a_j)$. 
\medskip 

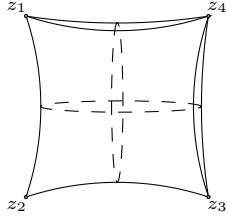
\begin{figure}
\definecolor{qqqqff}{rgb}{0.5,0.5,1.}
\begin{tikzpicture}[line cap=round,line join=round,>=triangle 45,x=1.0cm,y=1.0cm, scale=0.3]
\clip(-6,-5) rectangle (6,6);
\draw [shift={(0,16)}] plot[domain=4.39:5.03,variable=\t]({1*12.65*cos(\t r)+0*12.65*sin(\t r)},{0*12.65*cos(\t r)+1*12.65*sin(\t r)});
\draw [shift={(-16,0)}] plot[domain=-0.32:0.32,variable=\t]({1*12.65*cos(\t r)+0*12.65*sin(\t r)},{0*12.65*cos(\t r)+1*12.65*sin(\t r)});
\draw [shift={(0,-16)}] plot[domain=1.25:1.89,variable=\t]({1*12.65*cos(\t r)+0*12.65*sin(\t r)},{0*12.65*cos(\t r)+1*12.65*sin(\t r)});
\draw [shift={(15.7,0)}] plot[domain=2.81:3.47,variable=\t]({1*12.36*cos(\t r)+0*12.36*sin(\t r)},{0*12.36*cos(\t r)+1*12.36*sin(\t r)});
\draw [shift={(30,0)}] plot[domain=2.99:3.29,variable=\t]({1*26.31*cos(\t r)+0*26.31*sin(\t r)},{0*26.31*cos(\t r)+1*26.31*sin(\t r)});
\draw [shift={(0,30)}] plot[domain=4.56:4.87,variable=\t]({1*26.31*cos(\t r)+0*26.31*sin(\t r)},{0*26.31*cos(\t r)+1*26.31*sin(\t r)});
\draw [rotate around={0:(0.16,0)},dash pattern=on 5pt off 5pt] (0.16,0) ellipse (3.52cm and 0.25cm);
\draw [rotate around={90:(0,0.18)},dash pattern=on 5pt off 5pt] (0,0.18) ellipse (3.53cm and 0.25cm);
\begin{scriptsize}
\draw [fill=qqqqff] (-4,4) circle (2pt);
\draw [fill=qqqqff] (4,4) circle (2pt);
\draw [fill=qqqqff] (-4,-4) circle (2pt);
\draw [fill=qqqqff]  (4,-4) circle (2pt);
\node (a) at (-4.4,4.4) {$z_1$};
\node (b) at (-4.4,-4.4) {$z_2$};
\node (c) at (4.4,-4.4) {$z_3$};
\node (a) at (4.4,4.4) {$z_4$};
\end{scriptsize}
\end{tikzpicture}
\caption{Flat sphere glued from two squares}
\label{fig:pillow}
\end{figure} 

 If $d$ is even and all $a_j$ are odd, there is a holomorphic form  $\omega$ such that $\omega^2 = \pi^\ast q$ and then $\Sigma_d(a_1,a_2,a_3,a_4)$ 
 is a {\em translation surface}. By the Riemann-Hurwitz formula, the Euler characteristic of $\Sigma_d(a_1,a_2,a_3,a_4)$ is 
 given by
$$
\chi(\Sigma_d(a_1,a_2,a_3,a_4) ) = d \chi(\C P^1) - \sum_{j=1}^4 gcd(d,a_j) \big(d/gcd(d,a_j)- 1 \big) = \sum_{j=1}^4 gcd(d,a_j) - 2d
$$
and hence $\Sigma_d(a_1,a_2,a_3,a_4)$ has genus 
\begin{equation} \label{genuspillow}
g = d+1 - \frac 1 2 \sum_{j=1}^4 gcd(d,a_j). 
\end{equation}
By choosing $a_j$ such that $gcd(d,a_j) =1$,
we deduce from \eqref{genuspillow} that $\Sigma_d(a_1,a_2,a_3,a_4)$ is a 
closed surface of genus $g = d-1$. Then $\Sigma_d(a_1,a_2,a_3,a_4)$ appears as a $d/2$-fold branched cover of $\T^2$ with four branch points and exactly one ramification point over each branch point. For example, the translation surface $\Sigma_4(1,1,1,1)$ represented in Figure~\ref{fig:Wollmilchsau} is a $2$-fold branched cover of $\T^2$ with four branch points, see \cite{Forni&al}.
\medskip 

By construction, the regular branched covering $\pi : \Sigma_d(a_1,a_2,a_3,a_4) \to \C P^1$ is associated to the homomorphism 
$$
\rho : \pi_1( \C P^1 - \{z_1,z_2,z_3,z_4\}, x_0) = | \; \alpha_j : \prod_{j=1}^4 \alpha_j = 1 \; |  \to \Z/d\Z
$$
given by 
$\rho(\alpha_j) = a_j$, where $\alpha_j$ is the element represented by a simple loop (with base point $x_0$) which surrounds $z_j$ counterclockwise. 
Now consider the involution $J : \T^2 \to \T^2$ given by $J(x,y) = (-x,-y)$. This is a hyperelliptic involution with four fixed points $p_1,p_2,p_3,p_4$. The action generated by $J$ yields a $2$-fold branched cover $q : \T^2 \to  \C P^1$ and we can arrange $q(p_j) = z_j$. Then $q$ is associated with the homomorphism 
$$
\phi : \pi_1( \C P^1 - \{z_1,z_2,z_3,z_4\}, x_0) \to \Z/2\Z
$$ 
given by $\phi(\alpha_j) = \bar 1$. Notice that $\phi$ is the composition of the homomorphism $\rho$ with the canonical projection of $\Z/d\Z$ onto $\Z/2\Z$. Therefore the branched covering $\pi$ splits as 
$$
\Sigma_d(a_1,a_2,a_3,a_4) \stackrel{\pi_0}{\longrightarrow} \T^2 \stackrel{q}{\longrightarrow} \C P^1.
$$
Now the $d/2$-fold branched covering $\pi_0 : \Sigma_d(a_1,a_2,a_3,a_4) \to \T^2$ is regular, associated to the homomorphism 
\begin{equation} \label{branchrep2}
\rho_0 : \pi_1( \T^2 - \{p_1,p_2,p_3,p_4\}, x_0)  \to Ker(\Z/d\Z \to \Z/2\Z) \cong \Z/(d/2)\Z.
\end{equation}
Given any hyperbolic automorphism $A$ of $\T^2$ which leaves each $p_j$ and the base point $x_0$ fixed, there is a large enough $m$ such that $\rho_0 \scirc A^m_\ast = \rho_0$ since the set 
$$
Hom(\pi_1( \T^2 - \{p_1,p_2,p_3,p_4\}, x_0),\Z/(d/2)\Z)
$$
is finite. It follows that the kernel of $\rho_0$ is preserved by the action of $A^m$ and then $A^m$ lifts to a pseudo-Anosov homeomorphism
$\varphi : \Sigma_d(a_1,a_2,a_3,a_4)  \to \Sigma_d(a_1,a_2,a_3,a_4)$.
\medskip 

%
%
%
%
%

Let $\O_j$ be the periodic orbit of the Anosov flow that is given by the fixed point $p_j$ of $A^m$. There is a isotopy between $\O_j$ and a closed 
transversal $\gamma_j$ to both the fiber bundle over $\Sp^1$ and the foliation $\mathcal F_0$. Let $M$ be the $d/2$-fold branched cover of $\T^3_A$ with branch locus $\gamma = \bigsqcup_{j=1}^4 \gamma_j$ constructed as follows: 

\begin{list}{\labelitemi}{\leftmargin=17pt}

\item[(1)] we choose a tubular neighborhood $\psi_j : D_j \times \Sp^1 \to V_j$ of $\gamma_j$ as before,

\item[(2)] we take a $d/2$-fold branched covering $\pi_0 \times id : D'_j \times \Sp^1 \to D_j \times \Sp^1$ 
where the generator 
of $\pi_1 (D-\{0\})$ is now sent to the generator of  $\Z/(d/2)\Z$,

\item[(3)] we remove the interiors $\inte{V}_j \cong \inte{D}_j \times \Sp^1$ from $M$, we take the $d/2$-fold covering of $M - \inte{V}$ defined by trivial extension of the representation~\eqref{branchrep2}, 
and we attach four copies of the $d/2$-fold branched cover $D'_j \times \Sp^1$ using a diffeomorphism from each $\partial D'_j \times \Sp^1$ to the $d/2$-fold cover of each $\partial V_j \cong \partial D_j \times \Sp^1$.
\end{list}

\noindent
As previously, $\F_0$ turns into a foliation $\F$ on $M$ whose leaves are $d/2$-fold branched coverings of the leaves of $\F_0$ with countably 
many branch points. There are one ramification point with ramification index $d/2$ over each branch point. Riemann-Hurwitz formula tells us 
again that any leaf has infinite genus and no end is planar. 

\begin{figure}
\begin{tikzpicture}[line cap=round,line join=round,>=triangle 45,x=1.0cm,y=1.0cm,scale=0.6]
\clip(-4.4,-0.5) rectangle (10.6,4.5);
\fill[line width=0.8pt,fill=black,fill opacity=0.2] (-2,0) -- (-2,2) -- (-4,2) -- (-4,0) -- cycle;
\fill[line width=0.8pt,fill=black,fill opacity=0.2] (0,0) -- (0,2) -- (-2,2) -- (-2,0) -- cycle;
\fill[line width=0.8pt,fill=black,fill opacity=0.2] (2,0) -- (2,2) -- (0,2) -- (0,0) -- cycle;
\fill[line width=0.8pt,fill=black,fill opacity=0.2] (4,0) -- (4,2) -- (2,2) -- (2,0) -- cycle;
\fill[line width=0.8pt,fill=black,fill opacity=0.2] (4,2) -- (4,4) -- (2,4) -- (2,2) -- cycle;
\fill[line width=0.8pt,fill=black,fill opacity=0.2] (6,2) -- (6,4) -- (4,4) -- (4,2) -- cycle;
\fill[line width=0.8pt,fill=black,fill opacity=0.2] (8,2) -- (8,4) -- (6,4) -- (6,2) -- cycle;
\fill[line width=0.8pt,fill=black,fill opacity=0.2] (10,2) -- (10,4) -- (8,4) -- (8,2) -- cycle;
\draw [line width=0.8pt] (-2,0)-- (-2,2);
\draw [line width=0.8pt] (-2,2)-- (-4,2);
\draw [line width=0.8pt] (-4,2)-- (-4,0);
\draw [line width=0.8pt] (-4,0)-- (-2,0);
\draw [line width=0.8pt] (0,0)-- (0,2);
\draw [line width=0.8pt] (0,2)-- (-2,2);
\draw [line width=0.8pt] (-2,2)-- (-2,0);
\draw [line width=0.8pt] (-2,0)-- (0,0);
\draw [line width=0.8pt] (2,0)-- (2,2);
\draw [line width=0.8pt] (2,2)-- (0,2);
\draw [line width=0.8pt] (0,2)-- (0,0);
\draw [line width=0.8pt] (0,0)-- (2,0);
\draw [line width=0.8pt] (4,0)-- (4,2);
\draw [line width=0.8pt] (4,2)-- (2,2);
\draw [line width=0.8pt] (2,2)-- (2,0);
\draw [line width=0.8pt] (2,0)-- (4,0);
\draw [line width=0.8pt] (4,2)-- (4,4);
\draw [line width=0.8pt] (4,4)-- (2,4);
\draw [line width=0.8pt] (2,4)-- (2,2);
\draw [line width=0.8pt] (2,2)-- (4,2);
\draw [line width=0.8pt] (6,2)-- (6,4);
\draw [line width=0.8pt] (6,4)-- (4,4);
\draw [line width=0.8pt] (4,4)-- (4,2);
\draw [line width=0.8pt] (4,2)-- (6,2);
\draw [line width=0.8pt] (8,2)-- (8,4);
\draw [line width=0.8pt] (8,4)-- (6,4);
\draw [line width=0.8pt] (6,4)-- (6,2);
\draw [line width=0.8pt] (6,2)-- (8,2);
\draw [line width=0.8pt] (10,2)-- (10,4);
\draw [line width=0.8pt] (10,4)-- (8,4);
\draw [line width=0.8pt] (8,4)-- (8,2);
\draw [line width=0.8pt] (8,2)-- (10,2);
\draw [line width=0.8pt] (9.16,2.09)-- (9.03,1.91);
\draw [line width=0.8pt] (-3,0.1)-- (-3,-0.1);
\draw [line width=0.8pt] (-1.05,0.09)-- (-1.05,-0.11);
\draw [line width=0.8pt] (-0.95,0.09)-- (-0.95,-0.11);
\draw [line width=0.8pt] (0.99,0.11)-- (0.99,-0.09);
\draw [line width=0.8pt] (0.92,0.11)-- (0.92,-0.09);
\draw [line width=0.8pt] (1.08,0.11)-- (1.08,-0.09);
\draw [line width=0.8pt] (2.96,0.1)-- (2.96,-0.1);
\draw [line width=0.8pt] (3.04,0.1)-- (3.04,-0.1);
\draw [line width=0.8pt] (3.1,0.1)-- (3.1,-0.1);
\draw [line width=0.8pt] (2.9,0.1)-- (2.9,-0.1);
\draw [line width=0.8pt] (3,4.11)-- (3,3.91);
\draw [line width=0.8pt] (3.09,4.11)-- (3.09,3.91);
\draw [line width=0.8pt] (5.06,4.13)-- (5.06,3.93);
\draw [line width=0.8pt] (6.96,4.11)-- (6.96,3.91);
\draw [line width=0.8pt] (7.03,4.12)-- (7.03,3.92);
\draw [line width=0.8pt] (7.11,4.12)-- (7.11,3.92);
\draw [line width=0.8pt] (6.89,4.11)-- (6.89,3.91);
\draw [line width=0.8pt] (9.03,4.1)-- (9.03,3.9);
\draw [line width=0.8pt] (8.95,4.1)-- (8.95,3.9);
\draw [line width=0.8pt] (9.12,4.1)-- (9.12,3.9);
\draw [line width=0.8pt] (-2.9,2.11)-- (-3.03,1.93);
\draw [line width=0.8pt] (-0.9,2.1)-- (-1.03,1.92);
\draw [line width=0.8pt] (-0.79,2.09)-- (-0.92,1.91);
\draw [line width=0.8pt] (1.14,2.1)-- (1,1.92);
\draw [line width=0.8pt] (1.26,2.09)-- (1.12,1.92);
\draw [line width=0.8pt] (1.01,2.11)-- (0.88,1.93);
\draw [line width=0.8pt] (5.16,2.09)-- (5.02,1.91);
\draw [line width=0.8pt] (5.27,2.08)-- (5.14,1.9);
\draw [line width=0.8pt] (5.04,2.1)-- (4.91,1.92);
\draw [line width=0.8pt] (7.06,2.1)-- (6.93,1.92);
\draw [line width=0.8pt] (7.17,2.09)-- (7.04,1.91);
\end{tikzpicture}
\caption{The translation surface $\Sigma_4(1,1,1,1)$}
\label{fig:Wollmilchsau}
\end{figure}
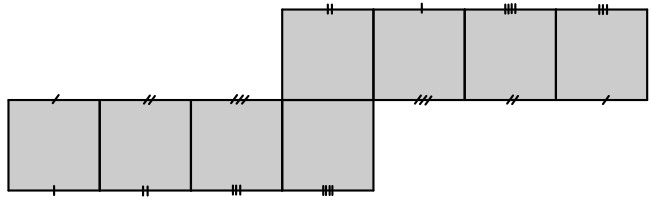

 
\setcounter{equation}{0}

\end{document}